\definecolor{aliceblue}{rgb}{0.9, 0.95, 1.0}
\numberwithin{equation}{section}
\newcommand\Z{{\mathbb Z}}
\DeclareMathOperator\aut{Aut}
\newcommand{\C}{{\mathbb C}}
\newcommand{\pslc}{{\mathrm{PSL}(2,\, \mathbb{C})}}
\newcommand{\pslr}{{\mathrm{PSL}(2,\mathbb{R})}}
\newcommand{\cp}{\mathbb{C}\mathrm{\mathbf{P}}^1}
\newcommand{\shomolzn}{\textnormal{H}_1(S_{g,n},\mathbb{Z})}
\newcommand{\shomolz}{\textnormal{H}_1(S_{g},\mathbb{Z})}
\theoremstyle{plain}                    
\newtheorem{thm}{Theorem}[section]
\newtheorem*{hthm}{Haupt's Theorem}
\newtheorem{thma}{Theorem}
\newtheorem{propa}[thma]{Proposition}
\newtheorem{cora}[thma]{Corollary}
\newtheorem{lem}[thm]{Lemma}
\newtheorem{prop}[thm]{Proposition}
\newtheorem{cor}[thm]{Corollary}
\newtheorem*{thmnn}{Theorem}
\theoremstyle{definition}
\newtheorem{defn}[thm]{Definition}
\newtheorem{ex}[thm]{Example}
\newtheorem{rmk}[thm]{Remark}
\newcommand*{\defeq}{\mathrel{\vcenter{\baselineskip0.5ex \lineskiplimit0pt
                     \hbox{\scriptsize.}\hbox{\scriptsize.}}}%
                     =}
\tikzstyle{rb} = [rectangle, rounded corners, minimum width=3cm, minimum height=1cm, text width=3cm, text centered, draw=black, fill=blue!30]
\tikzstyle{sb} = [rectangle, minimum width=2cm, minimum height=1cm, text width=3cm, text centered, draw=black, fill=violet!30]
\title[Automorphism groups of certain branched structures on surfaces]{On the automorphism groups of certain branched structures on surfaces}
\author{Gianluca Faraco}
\address[]{Mathematisches Institut Rheinische Friedrich-Wilhelms-Universit\"at, Bonn, Germany}
\email{gfaraco@uni-bonn.de}
\email{gianluca.faraco.math@gmail.com}
\begin{document}

\keywords{}
\subjclass[]{}%
\date{\today}
\dedicatory{}

\begin{abstract}
We consider translation surfaces with poles on surfaces. We shall prove that any finite group appears as the automorphism group of some translation surface with poles. As a direct consequence we obtain the existence of structures achieving the maximal possible number of automorphisms allowed by their genus and we finally extend the same results to branched projective structures.
\end{abstract}

\maketitle
\tableofcontents

\section{Introduction} 

\noindent Let $S_g$ be a closed surface of genus $g$ and let $\mathcal{M}_g$ be the moduli space of compact Riemann surfaces homeomorphic to $S_g$. For a Riemann surface $X$, let $\aut(X)$ denote its group of holomorphic automorphisms. If $X$ has genus at least $2$, a classical result by Hurwitz, see \cite{Hur}, states that its group of conformal automorphisms is a finite group with the cardinality being bounded only in terms of the genus; \textit{i.e.} $|\aut(X)\,|\le 84(g-1)$. Riemann surfaces which achieve this bound has been named \textit{Hurwitz surfaces} and the finite groups arising as their groups of automorphisms have been named Hurwitz groups. Hurwitz surfaces are pretty rare as they do not appear in every genus. For instance, there is none in genus two and the so-called \textit{Bolza surface} is the only Riemann surface with the highest possible order of the conformal automorphism group in this genus, see \cite{Bol}. The Hurwitz surface of lowest possible genus appears in genus three and it is known as the \textit{Klein quartic}, see \cite{Kle}. Along the years, a lot of research has been developed to study geometric and algebraic features of these objects, see \cite{CM} and references therein for a survey. For instance, in \cite{Mac}, Macbeath proved that the Hurwitz bound is attained for infinitely many $g\ge2$ and, around the same time, Greenberg proved that \textit{every} finite group can be represented as the automorphism group of some, possibly non-compact, Riemann surface of finite type, see in \cite{GL, GL2}.

\medskip

\noindent A \textit{translation surface} is a Riemann surface $X$ equipped with an additional structure determined by an abelian differential $\omega\in\Omega(X)$.  For a given pair $(X,\omega)$, let $\aut(X,\omega)$ denote the group of holomorphic automorphisms $f$ such that $f^*\omega=\omega$. For a translation surface $(X,\omega)$, we shall define $\aut(X,\omega)$ as the \textit{group of translations}. It naturally follows from the definition that $\aut(X,\omega)\le \aut(X)$ for any translation surface $(X,\omega)$ and it sounds natural to determine under which conditions $\aut(X,\omega)$ is as large as possible in the overall group $\aut(X)$. In their recent work \cite{SPWS}, Schlage-Puchta and Weitze-Schmith\"{u}sen showed that a translation surface $(X,\omega)$ determined by a \textit{holomorphic} differential has at most $4(g-1)$ translations. They call Hurwitz translation surfaces those structures that achieve this bound and characterise them as normal origamis, \textit{i.e.} square-tiled surfaces which arise as regular coverings of the standard torus $\C\,/\,\Z[\,i\,]$. In the present paper we are mainly interested in \textit{translation surfaces with poles}, that is translation surfaces determined by a \textit{meromorphic} abelian differential with poles of positive finite order, see Section \S\ref{sec:tswp} for more details. Our main result is the following

\smallskip

\begin{thma}\label{thm:main}
Every finite group appears as the group of translations of some translation surface with poles. More precisely, let $X$ be a compact Riemann surface of genus $g\ge2$ and let $G$ its group of conformal automorphisms. Then there exists a meromorphic differential $\omega\in\Omega(X)$ with poles of finite order such that $G=\aut(X,\omega)$.
\end{thma}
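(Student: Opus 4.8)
The plan is to reduce the general statement to the refined one and then to produce the required differential by pulling back from the quotient $X/G$. For the first sentence of the theorem I would invoke the known realisation --- related to the Greenberg-type results recalled in the introduction --- that every finite group occurs as the \emph{full} holomorphic automorphism group $\aut(X)$ of some compact Riemann surface $X$ of genus $g\ge 2$. Granting this, everything reduces to the refined statement, in which $X$ and $G=\aut(X)$ are prescribed. Here the key observation is that the group of translations always satisfies $\aut(X,\omega)\le\aut(X)=G$, so to force equality it is enough to exhibit a nonzero $G$-invariant meromorphic differential $\omega$ carrying at least one pole of positive order; then automatically $G\le\aut(X,\omega)$ and hence $G=\aut(X,\omega)$.

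To construct such an $\omega$ I would pass to the quotient $Y=X/G$ with its quotient map $\pi\colon X\to Y$, a branched Galois covering of degree $|G|$ whose deck group is $G$; in particular $\pi\circ g=\pi$ for every $g\in G$. Choosing a nonzero meromorphic differential $\beta$ on $Y$ whose polar set lies in the complement of the finite branch locus of $\pi$, I set $\omega:=\pi^{*}\beta$. Invariance is then immediate, since $g^{*}\omega=g^{*}\pi^{*}\beta=(\pi\circ g)^{*}\beta=\pi^{*}\beta=\omega$ for all $g\in G$. Moreover, because the poles of $\beta$ avoid the branch locus, $\omega$ acquires poles of exactly the same finite positive orders at each of the $|G|$ preimages of those points, so $(X,\omega)$ is a genuine translation surface with poles and the argument of the previous paragraph gives $\aut(X,\omega)=G$.

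The existence of the auxiliary $\beta$ is a routine consequence of Riemann--Roch: fixing a non-branch point $q\in Y$, the space of meromorphic differentials on $Y$ with at most a double pole at $q$ has dimension one greater than the space of holomorphic differentials, so it contains a differential with a genuine pole at $q$ (a single simple pole being excluded by the residue theorem). Pulling this back produces $\omega$ with double poles over $q$ and no other poles. I expect the only delicate point in the whole scheme to be precisely this insistence on poles: a naive averaging $\sum_{g\in G}g^{*}\eta$ of an arbitrary differential might vanish identically or, worse, land among the holomorphic forms, whereas forcing a pole at a regular value of $\pi$ guarantees both non-vanishing and the required singularities. Consequently the genuine weight of the statement rests on the cited realisation of arbitrary finite groups as full automorphism groups, the differential-geometric content being the short equivariant pullback above.
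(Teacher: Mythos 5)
Your proposal is correct and follows essentially the same route as the paper: first cite the Greenberg-type realisation of any finite group as the full group $\aut(X)$ of some compact Riemann surface, then pull back a meromorphic differential of the second kind from $Y=X/G$ and sandwich $G\le\aut(X,\omega)\le\aut(X)=G$. The only differences are that you supply the Riemann--Roch argument for the existence of the second-kind differential (which the paper merely cites as classical) and you place its pole away from the branch locus, a precaution that is harmless but not needed since pullback preserves poles at branch points as well.
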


\noindent It is immediate to observe that we cannot have a similar statement for translation surfaces without poles, that is determined by a holomorphic differential. In fact, Schlage-Puchta and Weitze-Schmith\"{u}sen's earlier work provides a dramatically sharper upper bound for the cardinality of the group of translation. From their work, it is not clear in principle which finite groups appear as the full group of translations of some translation surface without poles, namely determined by a holomorphic differential. In recent times, however, Hidalgo in \cite{HR} has proved that for a finite group of order $n$ there exists a translation surface $(X,\omega)$, where $\omega\in\Omega(X)$ is a holomorphic differential, of genus at least $\left\lceil\,\frac{n+4}{4}\,\right\rceil\le g$. Moreover, such a translation surface can be assumed to be a normal origami. In the same spirit of Greenberg in \cite{GL}, Hidalgo-Morales recently proved in \cite{HM} that any countable group appears as the group of translation of some origami on the Loch Ness monster, a surface of infinite genus and one end, see \cite{RI} for the classification of infinite type surfaces. Our main Theorem above sits somewhere in between these works as it aims to extend Hidalgo's earlier result to translation surfaces arising from abelian differentials which are meromorphic, but not holomorphic, on a Riemann surface. As a direct consequence we obtain that the Hurwitz bound is sharp for translation surfaces with poles. 

\smallskip
%\begin{cora}\label{cor:mainint} Let $X\in\mathcal M_g$ be a compact Riemann surface of genus $g\ge2$ and let $\aut(X)$ be its group of conformal automorphisms. Then, for every subgroup $G$ of $\aut(X)$ there exists an abelian holomorphic differential $\omega$ on $X$ such that $G=\aut(X,\omega)$. \end{cora}

%\noindent As a direct consequence we obtain that the Hurwitz bound is sharp for translation surfaces with poles. 

\begin{cora}\label{cor:main}
Let $X$ be a Hurwitz surface. Then there exists a meromorphic differential $\omega\in\Omega(X)$ such that $|\aut(X,\omega)\,|=84(g-1)$.
\end{cora}

\smallskip

\subsection{Groups of translations}\label{ssec:got} The lack of a sharper upper bound for the cardinality of $\aut(X,\omega)$ is related to the existence of translation structures with poles on the Riemann sphere. Since there are no non-trivial holomorphic differentials on $\cp$, for a translation surface $(X,\omega)$, the space $(X,\omega)/\aut(X,\omega)$ is a translation surface of positive genus whenever $\omega$ is a holomorphic differential. On the other hand, since there are meromorphic differentials on $\cp$, which always determine translation structures with poles, the space $(X,\omega)/\aut(X,\omega)$ may very well have genus zero when $\omega$ is meromorphic. We shall say that a translation surface $(X,\omega)$ is \textit{large} if it has a \textit{large group of translations} $\aut(X,\omega)$, that means the space $(X,\omega)/\aut(X,\omega)$ is the Riemann sphere equipped with some abelian differential $\xi$; necessarily with poles. 

\begin{propa}\label{prop:autchar} Let $(X,\omega)$ be a translation surface, possibly with poles, of genus $g\ge2$. If the group of translations $\aut(X,\omega)$ is not large, then \begin{equation}  |\aut(X,\omega)\,|\le 4g-4. \end{equation} \end{propa}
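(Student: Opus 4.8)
The plan is to descend to the quotient by the group of translations and to read the bound off from Riemann--Hurwitz. Since $g\ge 2$, Hurwitz's theorem makes $\aut(X)$ finite, so $G\defeq\aut(X,\omega)\le\aut(X)$ is finite; set $n=|G|$. As every $f\in G$ satisfies $f^*\omega=\omega$, the differential $\omega$ is $G$-invariant and hence descends to a meromorphic differential $\xi$ on $Y\defeq X/G$, realising $(Y,\xi)=(X,\omega)/\aut(X,\omega)$ with $\pi^*\xi=\omega$ for the quotient map $\pi\colon X\to Y$. Writing $h$ for the genus of $Y$, the hypothesis that $\aut(X,\omega)$ is \emph{not} large says exactly that $Y$ is not the Riemann sphere, i.e. $h\ge 1$. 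This reformulation is the first thing I would nail down.

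Next I would invoke Riemann--Hurwitz for the degree-$n$ branched cover $\pi$:
$$2g-2=n(2h-2)+\sum_{x\in X}\bigl(e_x-1\bigr),\qquad e_x=|G_x|.$$
The case $h\ge 2$ is then immediate, since $2h-2\ge 2$ and the non-negative ramification term force $2g-2\ge 2n$, whence $n\le g-1\le 4g-4$. The substance of the argument lies in the case $h=1$, where the topological term drops out and the entire bound must come from the ramification.

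For $h=1$ I would group the ramification points into fibres over the branch locus $B\subset Y$; since the fibres are $G$-orbits, all stabilisers over a given $q\in B$ are conjugate of a common order $e_q\ge 2$, and the fibre over $q$ contributes $(n/e_q)(e_q-1)=n(1-1/e_q)$. The formula becomes
$$2g-2=n\sum_{q\in B}\Bigl(1-\tfrac{1}{e_q}\Bigr).$$
Because $g\ge 2$ the left side is positive, so $B\ne\varnothing$; and each branch point has $e_q\ge 2$, so every summand is at least $\tfrac12$. Thus $\sum_{q\in B}(1-1/e_q)\ge\tfrac12$ and $n\le 2(2g-2)=4g-4$, with equality forced by the extremal torus quotient carrying a single order-two branch point.

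I expect the genuine (if modest) difficulties to be conceptual rather than computational: verifying that the $G$-invariant meromorphic differential truly descends to a meromorphic $\xi$ on $Y$---locally, in a linearising coordinate $z\mapsto\zeta z$ at a branch point, $\omega$ is a germ of meromorphic differential invariant under the cyclic stabiliser, hence the pullback of a meromorphic germ in the quotient coordinate $u=z^{e_q}$---and pinning the hypothesis ``not large'' to the clean statement $h\ge 1$. It is also worth observing, to connect with the flat geometry, that a nontrivial translation fixes no regular point of $\omega$ (there $\omega=dz$ and $f$ acts by $z\mapsto z+c$), so every ramification point of $\pi$ is a zero or pole of $\omega$; this is not needed for the inequality but explains why the relevant combinatorics is governed by the singularities of the translation structure.
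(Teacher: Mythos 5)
Your proposal is correct and follows essentially the same route as the paper: descend to the quotient $(Y,\xi)=(X,\omega)/\aut(X,\omega)$, observe that ``not large'' means $g_Y\ge1$, and apply Riemann--Hurwitz with the observation that each branch point contributes at least $\deg(\pi)/2$ to the ramification term. The paper merely packages your two cases into the single inequality $2g_Y-2+\tfrac{k}{2}\ge\tfrac12$ (ruling out $g_Y=1$, $k=0$ because it would force $g_X=1$), which is exactly your $h=1$, $B\ne\varnothing$ argument.
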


\smallskip
%\begin{propa}\label{prop:autchar} Let $(X,\omega)$ be a translation surface, possibly with poles, and period character $\chi$. If it has a large group of translations, then $\textnormal{Im}(\,\chi)$ is generated by the residues of $\omega$ at the punctures of $X$. Moreover, if $\textnormal{Im}(\,\chi)$ is not generated by the residues at the punctures and $g\ge2$, then \begin{equation}    |\aut(X,\omega)\,|\le 4g-4. \end{equation} \end{propa}

%\begin{propa}\label{prop:autchar}A translation surface with poles $(X,\omega)$ has a large group of translations if and only if its period character is the trivial representation. If a translation surface with poles $(X,\omega)$ has non-trivial period character, then\begin{equation} |\aut(X,\omega)\,|\le 4g-4.\end{equation}\end{propa}

%\noindent In other words, the upper bound provided in \cite{SPWS} reflects the fact that the holomorphic abelian differential cannot have trivial periods and the image of a period $\chi$ is dense in $\mathbb C$ or a lattice, namely a discrete group $\Lambda<\mathbb C$ of rank two. On the other hand, the trivial character $\chi:\shomolzn\longrightarrow \C$, that is $\chi(\gamma)=0$ for all $\gamma\in \shomolzn$, can be realised as the period of some meromorphic differential, see \cite[Theorem B]{CFG}, and hence we may have groups of translations with cardinality larger than $4g-4$.
%\medskip

\subsection{Periods and automorphisms} For a translation surface $(X,\omega)$, its \textit{period character} is a representation 
\begin{equation}\label{eq:per}
    \chi:\shomolzn\longrightarrow \C \quad \text{ defined as }\quad \gamma \longmapsto\int_\gamma \omega,
\end{equation} where $n\ge0$ is the number of poles of $\omega$ on $X$. For a peripheral loop $\gamma$, \textit{i.e.} a simple closed loop around a puncture, the period $\chi(\gamma)$ is $2\pi i$ times the residue of $\omega$ at the puncture. The problem of determine which representations appear as the period character of some abelian, possibly meromorphic, differential has been considered by \cite{HO, KM, CFG}.

\smallskip

%\noindent For a translation surface determined by holomorphic differential, the bound provided in \cite{SPWS} reflects the fact that the group of periods of a holomorphic abelian differential is either dense in $\mathbb C$ or a lattice, namely a discrete group $\Lambda<\mathbb C$ of rank two. In particular, no abelian holomorphic differential has all trivial periods. On the other hand, the trivial character $\chi:\shomolzn\longrightarrow \C$, defined as $\chi(\gamma)=0$ for all $\gamma\in \shomolzn$, can be realised as the period of some meromorphic differential, see \cite[Theorem B]{CFG}, and hence we may have groups of translations with cardinality larger than $4g-4$, see Corollary \ref{cor:main}.

%\medskip

%\noindent In fact, according to Proposition \ref{prop:autchar}, this happens only if the quotient $(X,\omega)/\aut(X,\omega)$ is the Riemann sphere. It is well-known that there are no non-trivial holomorphic differentials on $\cp$. However there are meromorphic differentials which determine translation structures with poles on the Riemann sphere. 
\noindent It is natural to determine when a representation $\chi$ as in \eqref{eq:per} can be realised as the period character of some translation surface with large group of translations. The following holds.

\begin{propa}\label{prop:charlarge}
Let $(X,\omega)$ be a large translation surface. Then its period character $\chi_\omega$ factors through a representation $\chi_\xi$, where $\xi\in\Omega(\cp)$. Conversely, given $\xi\in\Omega(\cp)$ and its period character $\chi_\xi$, consider $\Gamma=\textnormal{Im}(\,\chi_\xi\,)$. For any $H\lhd \Gamma$ of finite index, there is a large translation surface $(X,\omega)$ with group of translation $\aut(X,\omega)\cong G\cong\Gamma/H$.
\end{propa}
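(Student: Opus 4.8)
The plan is to treat the two assertions separately: the forward implication is a formal consequence of the functoriality of periods under the quotient map, while the converse is the substantive construction. For the \emph{forward direction}, suppose $(X,\omega)$ is large and set $A=\aut(X,\omega)$, so that the quotient $\pi\colon X\to X/A=\cp$ carries a differential $\xi\in\Omega(\cp)$ with $\pi^*\xi=\omega$; here the $A$-action is by translations, so its only fixed points lie among the singularities of $\omega$, whence the branch locus of $\pi$ lies in the singular set and the quotient differential $\xi$ is well defined. For any class $\gamma$ one then computes $\chi_\omega(\gamma)=\int_\gamma\omega=\int_\gamma\pi^*\xi=\int_{\pi_*\gamma}\xi=\chi_\xi(\pi_*\gamma)$, so that $\chi_\omega=\chi_\xi\circ\pi_*$ factors through $\chi_\xi$, as claimed.

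For the \emph{converse construction}, put $\Gamma=\textnormal{Im}(\chi_\xi)$ and $G=\Gamma/H$, and let $q\colon\Gamma\to G$ be the quotient. Let $p_1,\dots,p_m$ be the poles of $\xi$ and $\cp^*=\cp\smallsetminus\{p_1,\dots,p_m\}$. Since $H_1(\cp^*;\mathbb{Z})$ is abelian, $\chi_\xi$ is a homomorphism on $\pi_1(\cp^*)$, and $\rho:=q\circ\chi_\xi\colon\pi_1(\cp^*)\to G$ is surjective. I would take the associated connected regular $G$-cover $X^*\to\cp^*$, let $\omega$ be the pullback of $\xi$, and complete $X^*$ to a closed surface $X$ by filling in the punctures, extending $\omega$ to a meromorphic differential. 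By construction $G$ acts on $(X,\omega)$ by translations, so $G\le\aut(X,\omega)=:A$; a loop around $p_j$ is sent by $\rho$ to $q(\chi_\xi(\gamma_j))$, so the deck transformations realise exactly the translations of $\Gamma/H$ and $G\cong\Gamma/H$. Moreover $X/G=\cp$, and since $A\supseteq G$ the further quotient $X/A$ is again a sphere; thus $(X,\omega)$ is large. Adjoining balancing pairs of poles with residues in $\Gamma$ at new generic points if necessary, one may also arrange $g(X)\ge2$, so that $A$ is finite by the Hurwitz bound.

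It remains to prove $A=G$, and here I use that the conclusion depends only on the pair $(\Gamma,H)$: we are free to replace $\xi$ by any differential on $\cp$ with the same period group, and a generic configuration of poles with residues generating $\Gamma$ yields a \emph{primitive} differential, i.e.\ one admitting no nontrivial translation cover $(\cp,\xi)\to(Y,\eta)$. Assuming this, note that since $G\le A$ the $A$-orbits are unions of $G$-orbits, so the projection to $X/A$ factors as $X\xrightarrow{\,p\,}X/G=(\cp,\xi)\xrightarrow{\,r\,}X/A=(\cp,\eta)$, where $\eta\in\Omega(\cp)$ satisfies $(r\circ p)^*\eta=\omega=p^*\xi$; injectivity of $p^*$ on differentials gives $r^*\eta=\xi$. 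Thus $r$ is a translation cover of $(\cp,\xi)$ of degree $[A:G]=|A|/|G|$. Primitivity forces $r$ to be an isomorphism, so $[A:G]=1$ and $A=G\cong\Gamma/H$.

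The main obstacle is precisely this last step: a priori $\aut(X,\omega)$ may be strictly larger than the deck group, the extra translations being lifts of the translation automorphisms of the base. For instance $\xi=\tfrac{dz}{z}$ with $G=\Z/n$ gives $(X,\omega)=(\cp,\,n\,\tfrac{dw}{w})$ with $\aut=\mathbb{C}^*\neq\Z/n$ — and indeed this $\xi$ is \emph{not} primitive, since $w\mapsto w^{n}$ exhibits it as a nontrivial self-cover. The device that removes the obstruction is thus the reduction to a primitive base together with the rigidity of translation covers. The two points that require care are the existence of a primitive $\xi$ with prescribed period group $\Gamma$, which I would establish by a genericity argument on the positions of the poles, and the verification that the comparison map $r$ is genuinely a translation cover of flat surfaces (i.e.\ that $r^*\eta=\xi$ as differentials) rather than merely a holomorphic map of the underlying spheres.
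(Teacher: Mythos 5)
Your forward direction and the covering construction in your converse coincide with the paper's: the quotient $(X,\omega)\to(X,\omega)/\aut(X,\omega)=(\cp,\xi)$ gives $\chi_\omega=\chi_\xi\circ\pi_*$, and the paper likewise takes the regular cover of $\cp\smallsetminus\Sigma_\xi$ associated with the kernel of $\pi_1\to\Gamma\to\Gamma/H$, fills in the punctures, and pulls back $\xi$. Where you go beyond the paper is the last step: the paper asserts $G\cong\aut(X,\omega)$ ``by construction,'' whereas the construction only yields the inclusion $G\le\aut(X,\omega)$. Your observation that this inclusion can be strict --- $\xi=dz/z$ with $\Gamma=2\pi i\Z$ and $H=n\Gamma$ produces $(\cp,\,n\,dw/w)$, whose translation group contains $\C^*$ --- is correct and identifies a genuine weakness that the paper's proof does not address.

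However, your repair is incomplete precisely at the point you flag, and more seriously than you suggest. The factorisation $X\to X/G\to X/A$ with $r^*\eta=\xi$, and the deduction $A=G$ from primitivity of $\xi$ (granting $g(X)\ge2$ so that $A$ is finite), are sound. But the existence of a \emph{primitive} $\xi$ with $\textnormal{Im}(\chi_\xi)=\Gamma$ cannot be obtained by genericity of the pole positions alone: a differential on $\cp$ with prescribed simple poles and prescribed residues is \emph{unique} (the difference of two such is holomorphic on $\cp$, hence zero), and under a translation cover the residue at a preimage of a pole is the local degree times the residue downstairs, with the local degrees over each pole summing to the covering degree. Hence whenever the residue multiset admits such a compatible partition and the corresponding branched self-cover of $\cp$ exists, the differential is automatically a pullback for \emph{every} placement of the poles. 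Concretely, simple poles with residues $(1,1,-2)$ at any points $0,1,\lambda$ always arise as $f^*(dw/w)$ for a degree-two map $f$ with $f(0)=f(1)$, unramified there, and totally ramified at $\lambda$; such an $f$ exists for every $\lambda$. So primitivity has to be engineered through the choice of residues and pole orders (subject to summing to zero and generating $\Gamma$), and this existence statement still requires a proof --- or must be replaced by some other argument pinning down $\aut(X,\omega)$ exactly, e.g.\ a Riemann--Hurwitz comparison of the branch data of $X\to X/G$ and $X\to X/A$. As written, both the paper's proof and yours leave the identification $\aut(X,\omega)=G$ open; yours has the merit of isolating exactly where the difficulty lies.
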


\smallskip 

\noindent Therefore, a representation $\chi:\shomolzn\longrightarrow \C$ can be realised as the period of some large translation surface if it factors through the period character of some meromorphic differential on the Riemann sphere. 

\subsection{Branched projective structures} Translations surfaces, either with or without poles, belong to a much larger family of structures known as \textit{branched projective structures}, that is geometric structures locally modeled on the Riemann sphere with its group of conformal automorphisms $\pslc$, see sub-section \S\ref{ssec:tswpasbps} for a more detailed definition. Any branched projective structure, say $(X,\,\sigma)$, determines a well-defined underlying Riemann surface $X$. In fact, any such a structure can be seen as the choice of some special covering for a Riemann surface in the sense of Gunning \cite{GU}, that is an open cover for which transition functions are not just local biholomorphisms, but they belong to some more restricted group of transformations of the Riemann sphere $\cp$. Generally, a conformal automorphism does not need to preserve this special covering and hence it makes sense to ask how many non-trivial conformal automorphisms do preserve the projective structure. Let us denote with $\aut(X,\,\sigma)$ the group of \textit{projective automorphisms} of $(X,\,\sigma)$. As another immediate consequence of our Theorem \ref{thm:main} we obtain the following

\begin{cora}\label{cor:bps}
Let $X$ be a Hurwitz surface. Then there exists a branched projective structure $\sigma$ on $X$ such that $|\aut(X,\sigma)\,|=84(g-1)$.
\end{cora}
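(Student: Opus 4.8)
The plan is to deduce this statement directly from Corollary \ref{cor:main} by observing that a translation surface with poles is, in particular, a branched projective structure, and then running a squeeze argument between the group of translations and the full conformal automorphism group. Concretely, I would begin by invoking Corollary \ref{cor:main} to produce on the given Hurwitz surface $X$ a meromorphic differential $\omega\in\Omega(X)$ with $|\aut(X,\omega)\,|=84(g-1)$.

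The second step is to pass from $(X,\omega)$ to an induced branched projective structure $\sigma$. This uses the fact, recorded in sub-section \S\ref{ssec:tswpasbps}, that the translation charts $z\mapsto\int\omega$ defined away from the zeros and poles of $\omega$ have transition maps which are translations $w\mapsto w+c$, and these are a fortiori restrictions of Möbius transformations. Completing this atlas over the zeros of $\omega$ (branch points with local model $z\mapsto z^{k+1}$) and over the poles (where the developing map extends so as to send the pole to $\infty\in\cp$, with local model $z\mapsto z^{1-k}$) yields a genuine branched projective structure $\sigma$ whose underlying Riemann surface is exactly $X$.

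The crux is the inclusion $\aut(X,\omega)\subseteq\aut(X,\sigma)$. Any $f\in\aut(X,\omega)$ satisfies $f^*\omega=\omega$, hence in the translation charts $f$ is expressed as a translation, which is a projective transformation; therefore $f$ preserves the atlas defining $\sigma$ and is a projective automorphism. On the other hand, every projective automorphism is in particular holomorphic, so $\aut(X,\sigma)\subseteq\aut(X)$. Combining these with the fact that $X$ is Hurwitz, one obtains
\begin{equation}
84(g-1)=|\aut(X,\omega)\,|\le|\aut(X,\sigma)\,|\le|\aut(X)\,|=84(g-1),
\end{equation}
so that all inequalities are forced to be equalities and $|\aut(X,\sigma)\,|=84(g-1)$, as required.

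I expect the only delicate point to be the verification that a translation automorphism is literally a projective automorphism of the induced structure, namely the clean passage from $f^*\omega=\omega$ to the statement that $f$ respects the projective atlas, including its behaviour at the zeros and poles of $\omega$. Once this functorial embedding of translation surfaces with poles into branched projective structures is in place, the remainder of the argument is the elementary sandwich displayed above.
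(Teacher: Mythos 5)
Your overall strategy is sound and coincides with the second argument the paper itself gives for this corollary: produce $(X,\omega)$ via Corollary \ref{cor:main}, extend it to a branched projective structure (this is exactly Lemma \ref{lem:exte}), and observe that a translation automorphism is a fortiori a projective automorphism, so that
\begin{equation*}
84(g-1)=|\aut(X,\omega)\,|\le|\aut(X,\sigma)\,|\le|\aut(X)\,|=84(g-1)
\end{equation*}
closes the argument. (The paper's primary argument is even shorter: it pulls back the standard projective structure of $\cp$ along the quotient map $X\to X/\aut(X)\cong\cp$, bypassing translation surfaces entirely.)

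There is, however, one genuine gap in your step of ``completing this atlas over the poles.'' The local model you invoke for the developing map near a pole, and hence the existence of a single-valued $\cp$-chart sending the pole to $\infty$, is only available when the residue of $\omega$ at that pole vanishes, i.e.\ when $\omega$ is a differential of the second kind in the sense of Definition \ref{def:kindofdiff}. At a pole with nonzero residue (in particular at any simple pole) the primitive acquires a logarithmic term, the translation holonomy around the puncture is nontrivial, and no chart extends over the pole: the structure remains a branched projective structure on the punctured surface only. Corollary \ref{cor:main}, as stated, merely asserts the existence of \emph{some} meromorphic $\omega$ with $|\aut(X,\omega)\,|=84(g-1)$, so you are not entitled to assume it has vanishing residues. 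The repair is easy and is what the paper does: in the construction behind Corollary \ref{cor:main} choose $\xi\in\Omega(\cp)$ of the second kind (for instance $dz$, which has a single double pole of zero residue at $\infty$); since residues are multiplied by the local degree under pull-back, $\omega=\pi^*\xi$ is again of the second kind, and only then does Lemma \ref{lem:exte} apply. With that one adjustment your argument is complete; the inclusion $\aut(X,\omega)\subseteq\aut(X,\sigma)$ and the final squeeze are correct as written.
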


\noindent This corollary extends the earlier result of the author obtained with L. Ruffoni, see \cite{FR}, in the special case of \textit{unbranched} projective structures.

\subsection{Isosymmetric loci of Riemann surfaces and beyond} We would like to conclude this introduction with some additional and motivational remarks. For a finite group $G$, our methods permit to find out a Riemann surface $X$ and a meromorphic differential $\omega$ such that $G\cong\aut(X,\omega)$. A much more subtle question, however, is to determine whether a finite group $G$ appear as the full group of translations of some translation surface with poles and prescribed genus $g$. Of course, as a necessary condition $G$ must appear as the group of conformal automorphisms of some compact Riemann surface of genus $g$. This raises the question of how to single out those groups that appear as the full automorphism group of a genus $g$ Riemann surface. For a finite group $G$ acting on a topological surface $S_g$ of genus $g\ge2$, we may define the $G-$\textit{isosymmetric locus} inside $\mathcal M_g$ as the subspace of those compact Riemann surfaces admitting a $G-$action of given ramification type. These isosymmetric loci have been studied in literature and a complete classification is known for low-genus surfaces, \textit{i.e.} $g=2,3$, see \cite{KuNa, MSSV}. It is natural to introduce similar loci inside the moduli space $\mathcal{H}_g(m_1,\dots,m_k;-p_1,\dots,-p_n)$ of pairs $(X,\omega)$, where $X\in\mathcal M_g$ and $\omega\in\Omega(X)$ has $k$ zeros of orders $m_1,\dots,m_k$ and possibly $n$ poles of orders $p_1,\dots,p_n$. The study of geometry and topology of these loci inside the moduli spaces of translation surfaces turns out to be a challenging question worth of interest. In fact, these moduli spaces are known to be complex orbifolds whose singular points correspond to translation surfaces with non-trivial symmetries. 

\smallskip

\noindent A similar question can be posed for branched projective structures. More precisely: When does a finite group a $G$ appear as the full group of projective automorphisms of a branched projective structure on a Riemann surface of prescribed genus $g$? %\begin{quote}\textit{When does a finite group a $G$ appear as the full group of projective automorphisms for a branched projective structure on a Riemann surface of prescribed genus $g$?}\end{quote}
\noindent Even in this case we may introduce $G-$isosymmetric loci inside the space $\mathcal{B\,P}_g(m_1,\dots,m_k)$ of branched projective structures with $k$ singularities of order $m_i\ge0$. In particular, if $m_i=0$ then the moduli space comprises all unbranched projective structures on $S_g$. Again, the geometry and topology of $G-$isosymmetric loci is not known and it is certainly worth of interest. 
For instance, Francaviglia-Ruffoni in \cite{FrRu} studied a certain locus of hyperelliptic branched projective structures in the context of the classical Riemann-Hilbert problem for $\mathfrak{sl}_2$-systems.

%For branched structures, a first step in this direction has been done by Francaviglia-Ruffoni in \cite{FrRu} in which the authors studied in depth the locus of hyperelliptic branched projective structures and the relationship of these special structures with the classical Riemann-Hilbert problem for $\mathfrak{sl}_2$-systems.

%It is well-known that the group of conformal automorphisms of a generic compact Riemann surface of genus $g\ge3$ is trivial. Therefore, for any abelian differential $\omega$, either holomorphic or meromorphic, its group of complex translation is also trivial. For a generic compact Riemann surface $X$ of genus $2$, however, $\aut(X)\cong\Z_2$ and $\aut(X)=\aut(X,\omega)$ for any $\omega\in\Omega(X)$, see \cite{KZ}. On the other hand, we already know that some compact Riemann surfaces are more symmetric than others, meaning that its group of conformal automorphisms is bigger than the generic one. 
%one can consider the locus in $\mathcal{M}_g$ of compact Riemann surfaces admitting a $G-$action of given ramification type. We define such a locus as $G-$\textit{symmetric locus}.
 %For a finite group $G$, its $G-$symmetric locus has positive co-dimension inside the moduli space $\mathcal M_g$ as the group of conformal automorphisms of a generic compact Riemann surface of genus $g\ge3$ is trivial, whereas for a generic compact Riemann surface $X$ of genus $2$ it turns out $\aut(X)\cong\Z_2$. 

\subsection{Organisation of the paper} The present paper is organised as follows. Section \S\ref{sec:tswp} we recall the necessary background about translation surfaces and their periods characters. Section \S\ref{sec:agtswp} we shall prove our Theorem \ref{thm:main} and Propositions \ref{prop:autchar} and \ref{prop:charlarge}. Corollary \ref{cor:main} will follow as a direct consequence. Finally, we shall introduce in Section \S\ref{sec:autbps} branched projective structures on closed surfaces and we provide a proof of Corollary \ref{cor:bps}.

\subsection*{Acknowledgements} The author is indebted with Ursula Hamenst\"{a}dt for her support in the last months. I would like to thank Stefano Francaviglia who encouraged me to submit this notes. I am finally grateful to Lorenzo Ruffoni for his careful reading and helpful comments which led to significant improvements of the exposition.
The first draft of the present note has been written mostly during the conference "Spherical surfaces and related topics" at Cortona, Italy and completed during the $3-$days school "Geometry and dynamics of moduli spaces" at AMS University of Bologna the following week. The author is grateful to the organisers of both conferences.

\section{Translation surfaces with poles}\label{sec:tswp}

\noindent We shall begin by introducing translation surfaces with poles, their periods characters and their groups of translations.

\subsection{Translation surfaces} We have already alluded in the introduction above that translation surfaces may be seen as special examples of branched projective structures. We begin by providing an independent definition of translation surfaces and we shall subsequently explain in subsection \S\ref{ssec:tswpasbps} how these structures actually appear as members of that larger family of structures. 

\smallskip

\noindent Let $X\in\mathcal{M}_g$ be a compact Riemann surface. Let $\Omega(X)$ be the complex vector space of abelian differentials on $X$. In the sequel it will be convenient for us to adopt the following terminology which is completely borrowed from \cite{SG}.

\begin{defn}[Kinds of differentials]\label{def:kindofdiff} An abelian differential $\omega$ on a compact Riemann surface $X$ is said to be of the \textit{first kind} if it is a holomorphic differential. A differential $\omega$ is said to be of the \textit{second kind} if it is a meromorphic differential on $X$ with no more than a finite set of singular points that are poles each of which with zero residue. Finally, a differential $\omega$ is said to be of the \textit{third kind} if it is an arbitrarily meromorphic differential on $X$ with no more than a finite set of singular points (there are no longer restrictions on the residues).
\end{defn}

\begin{rmk}
It is worth noticing that all abelian differential of the first kind are also differentials of the second kind and, in turns, these latter are also differentials of the third kind. This fact can be easily seen by writing on a neighborhood of each point the Laurent series with respect to an appropriate coordinate.
\end{rmk}

\noindent Based on Definition \ref{def:kindofdiff} above, the complex-analytic definition of translation surface is as follow.

\begin{defn}[Translation surface]\label{tswp}
Let $X$ be a compact Riemann surface in $\mathcal{M}_g$. A \emph{translation structure} is the datum of an abelian differential $\omega\in\Omega(X)$. We shall define \textit{translation surface} a pair $(X,\omega)$ where $\omega\in\Omega(X)$. A translation surface is said to be \textit{with poles} if $\omega$ is a differential of second or third kind.
\end{defn}

\noindent An abelian differential $\omega$ has a \textit{zero} of order $m$ at a point $p$ if in a local coordinate neighborhood $(U,z)$ of this point $\omega$ has the form $\omega=f(z)\,dz$, where $f$ is a holomorphic function such that $f(z)=z^m\,g(z)$ with holomorphic such that $g(p)\neq0$. Similarly, an abelian differential of the second or third kind has a \textit{pole} of order $h$ at $p$ if in a coordinate neighborhood of $p$ the differential has the form
\begin{equation}
\omega=\left(\frac{a_{-h}}{z^{h}}+\cdots+\frac{a_{-1}}{z}+g(z)\right)dz,
\end{equation} where $g(z)$ is a holomorphic function on $U$ and $a_{-1}=0$ if $\omega$ is of the second kind. A point which is neither a zero nor a pole is called \textit{regular}.

\begin{rmk}[Gauss-Bonnet condition]\label{rmk:gbcond} On a Riemann surface $X\in\mathcal{M}_g$, let $\omega$ an abelian differential of any kind with $k$ zeros of orders $m_1,\dots,m_k$ and $n$ poles of orders $h_1,\dots,h_n$. The Gauss-Bonnet condition relates the orders of zeros and poles as follow
\begin{equation}
    \sum_{i=1}^k m_i \,-\,\sum_{j=1}^n h_j\,=\,2g-2.
\end{equation}
\end{rmk}

%\smallskip

\subsection{Groups of translations} We now focus on mappings between translation surfaces. Let $\textnormal{Diff}^+(S_g)$ be the group of orientation preserving diffeomorphisms of $S_g$. Let $X\in\mathcal{M}_g$ be a compact Riemann surface and let $\omega_1,\,\omega_2\in\Omega(X)$ be two abelian differentials. A diffeomorphism $f:(X,\,\omega_1)\longrightarrow (X,\,\omega_2)$ is said to be a \textit{translation} if $f^*\omega_2=\omega_1$. In particular, $f$ provides an isometry between $(X,\omega_1)$ and $(X,\omega_2)$; in fact a translation in local charts. In this case, notice that the differentials $\omega_1,\,\omega_2$ are necessarily of the same kind.

%\begin{defn} Let $X\in\mathcal{M}_g$ be a compact Riemann surface and let $\omega_1,\,\omega_2\in\Omega(X)$ be two abelian differentials. A diffeomorphism $f:(X,\,\omega_1)\longrightarrow (X,\,\omega_2)$ is said to be a \textit{translation} if $f^*\omega_2=\omega_1$. In particular, $f$ provides an isometry between $(X,\omega_1)$ and $(X,\omega_2)$. \end{defn}

\begin{defn}
Let $X\in\mathcal{M}_g$ be a compact Riemann surface and let $\omega\in\Omega(X)$ be an abelian differential. The \textit{group of translations} of $(X,\omega)$ is defined as
\begin{equation}\label{eq:transgroup}
    \aut(X,\omega)=\left\{\,f\in \textnormal{Diff}^+(S_g)\,\,\big|\,\, f^*\omega=\omega \,\right\}.
\end{equation}
\end{defn}

\smallskip

\noindent Let $\aut(X)$ denote the group of conformal automorphisms of $X$. The following Lemma is immediate from the definition.

\begin{lem}
Let $(X,\omega)$ be a translation surface possibly with poles. Then $\aut(X,\omega)<\aut(X)$.
\end{lem}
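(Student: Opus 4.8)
The statement to prove is that for a translation surface $(X,\omega)$, every element of $\aut(X,\omega)$ lies in $\aut(X)$; that is, $\aut(X,\omega)<\aut(X)$.

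The plan is to show that any $f\in\aut(X,\omega)$, which is a priori only an orientation-preserving diffeomorphism satisfying $f^*\omega=\omega$, is automatically a holomorphic automorphism of the underlying Riemann surface $X$. The key observation is that the abelian differential $\omega$ encodes the complex structure of $X$ away from its zeros and poles: on the open set $U\subset X$ of regular points, the local primitives of $\omega$ furnish holomorphic coordinates in which $\omega=dz$, and the complex structure of $X$ restricted to $U$ is precisely the one induced by these charts. Concretely, I would fix a regular point and choose a local coordinate $z$ with $\omega=dz$; such a coordinate exists since $\omega$ is nonvanishing there, so $z=\int\omega$ is a local biholomorphism.

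The central step is to verify that the condition $f^*\omega=\omega$ forces $f$ to be holomorphic on $U$. Working in two translation charts $(V,z)$ near a regular point $p$ and $(W,w)$ near $f(p)$, both normalized so that $\omega=dz$ and $\omega=dw$, the pullback relation $f^*\omega=\omega$ reads $d(w\circ f)=dz$ in these coordinates, which means $w\circ f = z + c$ for a constant $c$. In other words, $f$ is locally a translation $z\mapsto z+c$ in the preferred coordinates, and in particular it is holomorphic as a map between these charts. Since the translation charts form a holomorphic atlas for the complex structure of $X$ on $U$, this shows $f$ is holomorphic on the dense open set $U$ of regular points.

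The remaining point is to extend holomorphy across the finitely many zeros and poles of $\omega$. Because $f$ is a homeomorphism of $X$ permuting the singular set (it must send zeros to zeros and poles to poles of the same order, as these are topological-metric invariants of the structure), $f$ is holomorphic on the complement of a finite set and continuous everywhere; by the Riemann removable singularity theorem it extends holomorphically across these isolated points. Hence $f\in\aut(X)$, giving the desired inclusion. I expect the only mildly delicate part to be the bookkeeping around the singular points, but since these form a finite set and $f$ is already a homeomorphism, the removable singularity argument disposes of it cleanly.
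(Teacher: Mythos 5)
Your argument is correct: the identity $f^*\omega=\omega$ read in translation charts forces $f$ to be a local translation, hence holomorphic on the regular locus, and the extension across the finite singular set by continuity and removable singularities is exactly the right way to finish. The paper offers no proof at all (it declares the lemma ``immediate from the definition''), so your write-up is simply the detailed version of the intended argument; no discrepancy to report.
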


\noindent In what follows, we shall need the following 

\begin{lem}\label{lem:goodquot}
Let $(X,\omega)$ be a translation surface, possibly with poles, and let $G$ its group of translations. Then $(X,\omega)/G$ is a translation surface, possibly with poles.
\end{lem}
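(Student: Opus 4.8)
The plan is to show that the quotient $(X,\omega)/G$ inherits both a Riemann surface structure and an abelian differential, and that the latter may acquire poles but is still an honest meromorphic differential of second or third kind. Since $G=\aut(X,\omega)<\aut(X)$ is a finite group of holomorphic automorphisms of a compact Riemann surface (finiteness following from $g\ge 2$ via Hurwitz), the quotient $Y \defeq X/G$ carries a unique structure of compact Riemann surface making the projection $\pi\colon X\to Y$ a holomorphic branched covering; this is the classical construction of quotients of Riemann surfaces by finite group actions. So the underlying space is automatically a Riemann surface, and the only real content is to produce the differential on $Y$.

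\medskip

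\noindent First I would use the defining property of the group of translations. By definition every $f\in G$ satisfies $f^*\omega=\omega$, so $\omega$ is a $G$-invariant abelian differential on $X$. The key step is then to descend $\omega$ to $Y$. Away from the (finite) branch locus of $\pi$, the map $\pi$ is a local biholomorphism, and $G$-invariance of $\omega$ means precisely that at the various preimages of a point $y\in Y$ the local expressions of $\omega$ agree under the deck transformations; hence there is a well-defined meromorphic differential $\xi$ on $Y\setminus\{\text{branch points}\}$ with $\pi^*\xi=\omega$ there. The main obstacle, and the step deserving the most care, is what happens at the branch points of $\pi$: a priori $\xi$ might fail to extend, or might extend only as a differential with an essential singularity rather than a pole. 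I would resolve this by a local coordinate computation at a branch point. If $p\in X$ lies over a branch point of order $e$ (the order of the local stabiliser in $G$), then in suitable coordinates $z$ near $p$ and $w$ near $\pi(p)$ the projection is $w=z^{e}$, and $G$-invariance forces the local Laurent expansion of $\omega$ to be invariant under $z\mapsto \zeta z$ for a primitive $e$-th root of unity $\zeta$. Writing $\omega = h(z)\,dz$ with $h$ meromorphic and using $\pi^*(dw)=e\,z^{e-1}\,dz$, one checks that the invariance of $h(z)\,dz$ pins down $\xi$ near $\pi(p)$ as $h(z)\,dz/(e\,z^{e-1})$ re-expressed in $w$, and that this is again meromorphic in $w$: a pole or zero of $\omega$ at $p$ produces a pole or zero of $\xi$ at $\pi(p)$ of the order dictated by the local degree $e$. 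Thus $\xi$ extends to a genuine abelian differential on all of $Y$, meromorphic with poles of finite order.

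\medskip

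\noindent Finally I would record that the kind of the resulting differential is preserved in the relevant sense: if $\omega$ is holomorphic then $\xi$ is of the first kind (no residues are created, since a holomorphic invariant differential descends to a holomorphic one off the branch locus and the local computation above produces at worst zeros, never poles, at branch points), while if $\omega$ is of second or third kind then $\xi$ is again meromorphic with poles of finite order, i.e. of second or third kind. Hence $(X,\omega)/G=(Y,\xi)$ is a translation surface, possibly with poles, which is exactly the assertion. The only genuinely delicate point is the behaviour at branch points, handled by the explicit local normal form $w=z^e$ above; everything else is the standard descent of an invariant tensor along a finite quotient map.
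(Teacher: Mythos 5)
Your proof is correct and follows essentially the same route as the paper's: descend the $G$-invariant differential $\omega$ to the quotient Riemann surface $Y=X/G$ away from the branch locus, then control the behaviour at branch points via the local normal form $w=z^{e}$. The paper merely asserts the branch-point analysis (``it can be showed that a point with non-trivial stabiliser is a singularity of order divisible by the local degree''), whereas you carry out the local computation $\xi = h(z)\,dz/(e\,z^{e-1})$ explicitly, so your write-up is, if anything, more complete at the one delicate step.
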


\begin{proof}
We begin by observing that, since $G=\aut(X,\omega)\le \aut(X)$, then $Y=X/G$ is a Riemann surface. Let $\pi:(X,\omega)\longrightarrow (X,\omega)/G$ be the regular covering, possibly branched, of degree $d=|G|$ arising from the natural action of $G$ on $(X,\omega)$. As $\omega$ is invariant under the action of $G$, at any point with trivial stabiliser it descends to the quotient $(X,\omega)/G$. Let $q_1,\dots, q_n\in X/G$ be the branched values of $\pi$. Since the covering $\pi$ is regular, all the preimages of any $q_i$ have the same local degree $d_i\ge2$. Let $s_i$ be the cardinality of $\pi^{-1}(q_i)$; notice that $d=s_i\,d_i$ for all $i=1,\dots,n$. For any $i$, let $\{p_1,\dots,p_{i\,s_i}\}=\pi^{-1}(q_i)$. For any $j=1,\dots,s_i$, the points $p_{ij}$ all have non-trivial stabilizer $G_{ij}<G$, where $|G_{ij}|=d_i$. It can be showed that a point with non-trivial stabilizer is always a singularity for $\omega$; it can be either a zero or a pole, of order $m_{ij}$ which is a multiple of $d_i$. Therefore, $\omega$ descends to a differential, say $\xi$, on $Y$ with a singularity of order $m_{ij}/d_i\in\Z$ at $q_i$. In particular, $(X,\omega)/G=(Y,\xi)$ is a translation surface with poles and $\omega=\pi^*\xi$.
\end{proof}

\begin{rmk}
Notice that any regular point has trivial stabilizer or, equivalently, any point with non-trivial stabiliser is a singularity for the differential.
\end{rmk}

\smallskip

\subsection{Period characters} Let $X\in\mathcal{M}_g$ be a compact Riemann surface and let $\omega\in\Omega(X)$ be an abelian differential. Let $\Sigma_\omega=\{\,\textnormal{poles of}\,\omega\,\}$ and let $n=|\Sigma_\omega|$ be its cardinality. According to Definition \ref{def:kindofdiff}, we can notice that $n$ is a finite non-negative integer. A differential $\omega$ restricts to a holomorphic differential on $X\setminus \Sigma_\omega$ and it always determines a representation $\chi:\shomolzn\longrightarrow\C$ defined by integration along closed loops on $S_{g,n}$ as in \eqref{eq:per}. The representation $\chi$ is called the \textit{period character} of $\omega$.

\medskip

\noindent A classical Theorem by Haupt, originally stated in \cite{HO} and recently rediscovered by Kapovich in \cite{KM}, provides necessary and sufficient conditions for a representation to appear as the period character for an abelian differentials of the first kind, \textit{i.e.} holomorphic. 

\begin{hthm}
A representation $\chi:\shomolz\longrightarrow\C$ appears as the period character of some abelian differential of the first kind if and only if the following conditions holds:
\begin{itemize}
    \item[1.] $\textnormal{vol}(\chi)=\displaystyle\sum_{i=1}^g\,\Im\left(\,\overline{\chi(\alpha_i)}\,\chi(\beta_i)\,\right)>0$, where $\{\alpha_1,\beta_1,\dots,\alpha_g,\beta_g\}$ is a symplectic basis of $\,\shomolz$;\\
    \item[2.] if $\,\textnormal{Im}(\,\chi\,)=\Lambda$ is a lattice in $\C$, then $\textnormal{vol}(\chi) \ge 2\cdot\textnormal{vol}\big(\C/\Lambda\big)$.
\end{itemize}
\end{hthm}

\smallskip

\noindent The quantity $\textnormal{vol}(\chi)$ is called \textit{volume} of $\chi$ and it does not depend on the choice of a symplectic basis. This terminology is motivated by the fact that it coincides with the area of the singular Euclidean metric on $S_g$ determined by $\omega$. Haupt's Theorem is specific to the holomorphic case. In fact, there are no obstructions on realising a representation as the period character of some differential of second or third type as recently proved in \cite[Theorem A]{CFG}. More precisely

\begin{thmnn}[\cite{CFG}] 
Every representation $\chi:\shomolzn\longrightarrow\C$ appears as the period character of some abelian differential of the second or third kind  where all the poles are at the punctures.
\end{thmnn}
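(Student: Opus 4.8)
The plan is to realise $\chi$ \emph{geometrically}, by constructing a flat surface directly: a translation surface with poles carries a natural conformal structure $X$ together with the differential $\omega=dz$ in flat coordinates, and its period character is read off as the holonomy of the developing map. First I would fix a basis of $\shomolzn\cong\Z^{2g+n-1}$ consisting of the standard handle classes $a_i,b_i$ together with $n-1$ of the peripheral loops $c_1,\dots,c_{n-1}$; the last loop satisfies $c_n=-\sum_{j<n}c_j$ in homology. Thus $\chi$ is free data: the handle periods $(u_i,v_i)=(\chi(a_i),\chi(b_i))$ and the residues $r_j=\chi(c_j)/(2\pi i)$, subject only to $\sum_j r_j=0$, which is forced by the homological relation. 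This identity is precisely the residue theorem, and the fact that it is automatic---rather than an extra constraint---is the structural reason to expect no Haupt-type obstruction once poles are allowed.

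The base case $g=0$ I would dispatch explicitly. Placing the punctures at points $p_1,\dots,p_n\in\cp$, the differential $\xi=\sum_j \frac{r_j}{z-p_j}\,dz$ has simple poles with the prescribed residues, and since $\sum_j r_j=0$ it extends holomorphically over $\infty$; at any puncture where $r_j=0$ I would add a term $\frac{dz}{(z-p_j)^2}$, which has a double pole of zero residue and alters neither the residues nor the regularity at $\infty$. This realises every character on the sphere and, crucially, produces a flat surface with infinite-area ends coming from the poles.

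For $g\ge1$ I would add handles one at a time to this model by the parallel-slit surgery: inside an infinite region issuing from a pole, cut two disjoint parallel straight slits of holonomy $u_i$, offset from one another by the vector $v_i$, and reglue them crosswise. Each surgery raises the genus by one, leaves all residues unchanged, and equips the two new homology generators with periods $u_i$ and $v_i$. The infinite ends guarantee that disjoint embedded slits with an arbitrary offset always fit---this is exactly the room that the holomorphic setting lacks, and where the volume and lattice conditions of Haupt's Theorem dissolve. After $g$ surgeries I would declare the $n$ poles to be the punctures, so that all poles sit at the punctures as required, and check the Gauss--Bonnet relation using the freedom in the orders of the zeros created by the gluings.

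The main obstacle is the degenerate handles. The slit surgery needs a nonzero slit vector; when $u_i=0$ one simply exchanges the roles of the two generators and slits along $v_i$, so the genuinely delicate case is $u_i=v_i=0$, a handle on which the holonomy is trivial. Here no flat torus can be grafted in, since a nonzero holomorphic differential on a torus is never exact; instead I would graft, in place of a small flat disk around a regular point, a genus-one piece with one boundary circle that develops as a branched cover of that disk. Such a piece exists by Riemann--Hurwitz, carries a differential with trivial holonomy on both new cycles, and introduces only additional zeros---harmless for Gauss--Bonnet. Verifying that this surgery glues seamlessly and that the slit placements can always be kept disjoint from the existing singularities is the technical heart of the argument; everything else is bookkeeping of orders and periods.
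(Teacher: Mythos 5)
First, note that the paper does not prove this statement at all: it is imported verbatim as Theorem~A of the reference \cite{CFG}, so there is no in-paper proof to compare against. Your strategy --- an explicit genus-zero model $\sum_j \frac{r_j}{z-p_j}\,dz$ (corrected at zero-residue punctures by a term with a double pole), followed by genus-increasing slit surgeries in the infinite ends --- is essentially the strategy of that cited reference, and most of it is sound: the residue identity really is absorbed by the relation among peripheral classes in $\shomolzn$, the genus-zero case is complete, and the parallel-slit surgery does produce a handle with periods $(u_i,v_i)$ without touching the residues.

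There are, however, two genuine gaps. The first is the handle with trivial periods. You propose to remove a flat disk around a regular point and glue in a genus-one piece with one boundary circle that is a branched cover of that disk, citing Riemann--Hurwitz for its existence. Riemann--Hurwitz gives the topology but forbids the gluing: a connected degree-$d$ branched cover of the disk with connected boundary and Euler characteristic $-1$ (genus one, one boundary circle) necessarily restricts on its boundary to a degree-$d\ge 2$ cover of $\partial D$. Its boundary therefore develops onto the circle traversed $d$ times and has $d$ times the length of the boundary of the hole, so no translation (indeed no local isometry) can identify the two boundaries. As stated, the surgery is geometrically impossible; realising a handle on which $\chi$ vanishes identically requires a genuinely different splicing (this is precisely the delicate case treated separately in \cite{CFG}), and since nothing in the hypothesis prevents $\chi$ from killing every handle generator, this case cannot be discarded. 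The second gap is the claim that ``disjoint embedded slits with an arbitrary offset always fit'' in an infinite end. The end at a \emph{simple} pole of residue $r_j\neq 0$ is a half-infinite cylinder of circumference $|2\pi i\, r_j|$, and a straight segment whose holonomy vector is a real multiple $\lambda\cdot 2\pi i\,r_j$ with $|\lambda|\ge 1$ does not embed in that cylinder. If all prescribed residues are nonzero, your base model has only cylindrical ends and the slit placement can fail. The fix is cheap --- the pole orders are not prescribed by $\chi$, so one may raise the order of one pole to obtain a planar end containing arbitrarily large half-planes --- but it must be said, since otherwise the induction has no place to perform the surgery.
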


\smallskip

\noindent The following straightforward observations will be a key fact in the sequel, see Section \S\ref{sec:agtswp}.

\begin{lem}\label{lem:factor}
Let $X\in\mathcal{M}_g$ be a compact Riemann surface and let $\omega\in\Omega(X)$ be an abelian differential of any kind. Let $G=\aut(X,\omega)$ and let $\pi:(X,\omega)\longrightarrow (X,\omega)/G$ be the covering projection. Then the period character $\chi$ of $\omega$ factors through $\pi_*$ in homology.
\end{lem}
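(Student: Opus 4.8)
The plan is to exhibit the factoring representation explicitly and then verify the defining identity by a change of variables. By Lemma~\ref{lem:goodquot}, the quotient $(X,\omega)/G=(Y,\xi)$ is again a translation surface, possibly with poles, and the two differentials are related by $\omega=\pi^*\xi$. I would take as the candidate factor the period character $\chi_\xi:\textnormal{H}_1(Y\setminus\Sigma_\xi,\Z)\to\C$ of $\xi$, defined by $\delta\mapsto\int_\delta\xi$, and aim to establish $\chi=\chi_\xi\circ\pi_*$.

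First I would check that $\pi_*$ is a morphism between the correct homology groups. Since the covering is regular with deck group $G$, every preimage of a branch value has non-trivial stabiliser and is therefore a singularity of $\omega$ by Lemma~\ref{lem:goodquot}; conversely a regular point of $\omega$ has trivial stabiliser and maps to a regular point of $\xi$. Because the local exponent of $\pi$ at such a singularity equals the ratio of the orders of $\omega$ and $\xi$, poles are carried to poles, so that $\Sigma_\omega=\pi^{-1}(\Sigma_\xi)$ and $\pi$ restricts to a continuous map $X\setminus\Sigma_\omega\to Y\setminus\Sigma_\xi$. Functoriality of the first homology group then yields a well-defined pushforward $\pi_*:\textnormal{H}_1(X\setminus\Sigma_\omega,\Z)\to\textnormal{H}_1(Y\setminus\Sigma_\xi,\Z)$, irrespective of the fact that $\pi$ may still be branched over the zeros lying above branch values.

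With this in hand the identity is immediate: for any class $\gamma$ represented by a loop avoiding the poles,
\begin{equation*}
\chi(\gamma)=\int_\gamma\omega=\int_\gamma\pi^*\xi=\int_{\pi_*\gamma}\xi=\chi_\xi(\pi_*\gamma),
\end{equation*}
where the crucial middle step is the standard fact that integrating a pulled-back form over a cycle equals integrating the form over the pushed-forward cycle. Hence $\chi=\chi_\xi\circ\pi_*$ factors through $\pi_*$ as claimed.

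The statement is in essence a functoriality property of periods, so I do not anticipate a genuine obstacle; the only point deserving care is the bookkeeping in the second step, namely verifying that $\pi$ carries the pole set of $\omega$ exactly onto the pole set of $\xi$, so that $\pi_*$ lands in the homology of the correctly punctured quotient rather than in that of $Y$ itself.
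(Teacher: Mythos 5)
Your proof is correct and follows essentially the same route as the paper's: identify the quotient $(Y,\xi)$ with $\omega=\pi^*\xi$ via Lemma~\ref{lem:goodquot}, pass to punctured surfaces, and conclude with the change-of-variables identity $\int_\gamma\pi^*\xi=\int_{\pi_*\gamma}\xi$. The only (harmless) divergence is that the paper punctures at \emph{all} singularities so that the restricted map is an honest unbranched covering, whereas you puncture only at the poles and invoke functoriality of $\textnormal{H}_1$ for the still-branched map; your bookkeeping in fact matches the stated domain $\textnormal{H}_1(S_{g,n},\Z)$ of the period character (with $n$ the number of poles) more closely.
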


\begin{proof}
Let $(X,\omega)$ be a translation surface, possibly with poles, and let $G$ be its group of translations. The group $G$ acts on $(X,\omega)$ by translations and, according to Lemma \ref{lem:goodquot} above, the quotient space $(Y,\xi)$ is a translation surface such that $\omega=\pi^*\xi$. Let $\textnormal{Sing}(\omega)\subset X$ be the set of singularities of $\omega$, that is zeros and poles.
Notice that $\textnormal{Sing}(\omega)$ cannot be empty if $g\ge2$.
%and let $\textnormal{Sing}(\omega)$ be the subset of all singularities of $\omega$. Clearly, $\Sigma\subseteq \textnormal{Sing}(\omega)$ and the equality holds if and only if 
%$\omega$ is a differential of the first kind, see Definition \ref{def:kindofdiff}.
Let $n=|\,\textnormal{Sing}(\omega)\,|$.
%and let $k+n$ be the cardinality $\textnormal{Sing}(\omega)$, for some $k\ge1$. 
The covering projection $\pi$ restricts to a regular covering map, say $\pi^{\text{red}}$, defined as
\begin{equation}
    S_{g,\,n}\cong X\setminus\textnormal{Sing}(\omega) \longrightarrow Y\setminus \pi\left( \textnormal{Sing}(\omega)\right)\cong S_{h,\,m}. 
\end{equation}
%\begin{equation}S_{g,\,k+n}\cong X\setminus\textnormal{Sing}(\omega) \longrightarrow Y\setminus \pi\left( \textnormal{Sing}(\omega)\right)\cong S_{h\,l}. \end{equation}
%Recall, from the classical theory of branched covering, that $\textnormal{Sing}(\xi)\subseteq\pi\left( \textnormal{Sing}(\omega)\right)$. 
This covering map induces a mapping  $\pi^{\text{red}}_*$ in homology, that is 
\begin{equation}
    \pi^{\text{red}}_*:\textnormal{H}_1(S_{g,\,n},\,\Z)\longrightarrow \textnormal{H}_1(S_{h,\,m},\,\Z).
\end{equation}
Since $\omega=\pi^*\xi$, then
\begin{equation}
    \int_\gamma \pi^*\xi\, =\,\int_{\pi^{\text{red}}_*(\gamma)} \xi
\end{equation}
for any $\gamma\in\textnormal{H}_1(S_{g,\,n},\,\Z)$ and $\pi^{\text{red}}_*(\gamma)\in\textnormal{H}_1(S_{h,\,m},\,\Z)$ and the desired conclusion follows.
\end{proof}

\section{Automorphism groups of translation surfaces}\label{sec:agtswp} 

\noindent This section is devoted to prove Theorem \ref{thm:main}, its Corollary \ref{cor:main} and Propositions \ref{prop:autchar} and \ref{prop:charlarge}. We begin with the proof of these propositions.
%In subsection \S\ref{ssec:large} below we first prove the characterisation provided by Proposition \ref{prop:autchar}.

\subsection{Large groups of translations}\label{ssec:large}

In this section we investigate the relationship between automorphisms and period characters. Let $(X,\omega)$ be a translation surface possibly with poles. 

\begin{defn}[Large group of translations]\label{def:lgt}
A translation surface $(X,\omega)$ has a \textit{large group of translations} if $(X,\omega)/\aut(X,\omega)$ is a sphere equipped with a differential $\xi\in\Omega(\cp)$. In this case, we shall also say that $(X,\omega)$ is a \textit{large translation surface}.
\end{defn}

\noindent It is worth mentioning that there are no non-trivial holomorphic differential on $\cp$, see Remark \ref{rmk:gbcond}. Therefore, it directly follows from Definition \ref{def:lgt} and Lemma \ref{lem:goodquot} that a pair $(X,\omega)$ has a large group of translations if and only if $\omega$ is a meromorphic differential.

\begin{proof}[Proof of Proposition \ref{prop:autchar}]
%Let $(X,\omega)$ be a translation surface with poles, see Definition \ref{tswp}, and large group of translations. Let $\chi_\omega:\shomolzn\longrightarrow \C$ be its period character. Then, by Definition \ref{def:lgt}, the quotient space $(X,\omega)/\aut(X,\omega)$ is a sphere equipped with an abelian differential $\xi$, necessarily meromorphic. Let us denote by $\pi$ the covering projection $\pi:(X,\omega)\longrightarrow (\cp,\,\xi)$. It is immediate to observe that the group $\textnormal{Im}(\chi_\xi)$ is generated by the residues at the poles of $\xi$. Lemma \ref{lem:goodquot} implies $\omega=\pi^*\xi$ and by Lemma \ref{lem:factor} the period character $\chi_\omega$ factors through $\pi_*$ in homology, \textit{i.e.} $\chi_\omega=\chi_\xi\circ \pi_*$. The conclusion follows and $\textnormal{Im}(\chi_\omega)$ is generated by the residues of the poles of $\omega$.\smallskip

\noindent Let $(X,\omega)$ be a translation surface of genus $g_X\ge2$; where $\omega$ is a differential of any kind. Since its group of translations $\aut(X,\omega)$ is not large, the quotient space $(Y,\xi)$ is a translation surface (possibly with poles) of positive genus; $g_Y\ge1$. The desired result follows as a straightforward application of Riemann-Hurwitz formula. This is the same computation developed by Schlage-Puchta and Weitze-Schmith\"{u}sen in \cite{SPWS} and we include the details for the reader's convenience. 

\smallskip

\noindent Let $\pi:(X,\omega)\to (Y,\xi)$ be the covering projection of degree $\deg(\pi)=|\aut(X,\omega)\,|$. Let $q_1,\dots,q_k\in (Y,\xi)$ be the branched values of $\pi$. Recall that $\pi$ is a regular covering and hence for any $q_i$ all the preimages have the same local degree $d_i$. Finally, let $s_i$ be the cardinality of $\pi^{-1}(q_i)$. Then
\begin{align}
    2g_X-2&=\deg(\pi)\big(2g_Y-2\big)+\sum_{i=1}^k s_i(d_i-1)\\
     &=\deg(\pi)(2g_Y-2)+k\deg(\pi)-\sum_{i=1}^k s_i\\
      &\ge \deg(\pi)\left(2g_Y-2+\frac{k}{2}\right).
\end{align}
In order to maximise $\deg(\pi)$ we need to minimise $\left(2g_Y-2+\frac{k}{2}\right)$. Observe that since $\aut(X,\omega)$ is not large, then $g_Y\ge1$ and $2g_Y-2+\frac{k}{2}\ge0$. The equality holds for $g_Y=1$ and $k=0$ but these conditions would imply $g_X=1$, which is a contradiction because we are assuming $g_X\ge2$. Therefore $\left(2g_Y-2+\frac{k}{2}\right)\ge\frac12$ and hence $|\aut(X,\omega)\,|=\deg(\pi)\le 4g_X-4$ as desired.
\end{proof}

\noindent Notice that $g_X\ge2$ is a necessary condition because there are genuine coverings of a torus to itself in any degree and hence there is no longer an upper bound on the cardinality of $\aut(X,\omega)$. This also serves to show that the largeness property of a group is not intended in the sense of its cardinality but on how big is the Euler characteristic of the quotient space. A group of translations is \textit{large} if the Euler characteristic of the quotient space is as large as possible, namely $2$. 

%\cm{def of large groups and comment about Galois-Belyi}

%\cm{proof of Prop C}

\subsection{Periods character of large translation surfaces}
We aim to prove Proposition \ref{prop:charlarge}; that is we aim to provide a characterisation of whether a representation $\chi:\shomolzn\longrightarrow \C$ appears as the period character of some large translation surface. According to Proposition \ref{prop:autchar}, this is never the case if $n=0$, that is $\chi$ is determined by a holomorphic differential on a Riemann surface $X\in\mathcal{M}_g$. Let us premise the following lemma whose proof is a routine exercise.

\begin{lem}\label{lem:algprem}
Let $f:G\longrightarrow \Gamma$ be an epimorphism of groups. Let $H\lhd\Gamma$ be a subgroup of finite index. Then $f^{-1}(H)<G$ is normal of finite index. Moreover, $[f^{-1}(H):G]=[H:\Gamma]$.
\end{lem}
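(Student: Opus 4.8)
The plan is to exhibit $f^{-1}(H)$ as the kernel of a suitable surjection and then let the first isomorphism theorem deliver normality, finiteness of the index, and the index formula all at once. This is the cleanest route and avoids handling coset bijections by hand.

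First I would record that $f^{-1}(H)$ is a subgroup of $G$, being the preimage of a subgroup under a homomorphism. To verify normality directly, I take any $g\in G$ and any $x\in f^{-1}(H)$ and compute $f(gxg^{-1})=f(g)\,f(x)\,f(g)^{-1}$; since $f(x)\in H$ and $H$ is normal in $\Gamma$, this product lies in $H$, so $gxg^{-1}\in f^{-1}(H)$. It is worth noting that this step uses only the normality of $H$, not the surjectivity of $f$.

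For the index statement I would consider the composite homomorphism $p\circ f:G\longrightarrow \Gamma/H$, where $p:\Gamma\longrightarrow \Gamma/H$ is the canonical projection. As a composite of two surjections it is surjective (this is exactly where the hypothesis that $f$ is an epimorphism enters), and its kernel is precisely $\{\,g\in G : f(g)\in H\,\}=f^{-1}(H)$. The first isomorphism theorem then gives an isomorphism $G/f^{-1}(H)\cong \Gamma/H$, whence $[f^{-1}(H):G]=[H:\Gamma]$; since the right-hand side is finite by assumption, so is the left-hand side.

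I do not expect a genuine obstacle: the statement is a standard consequence of the correspondence between subgroups under a surjective homomorphism. The only point deserving care is bookkeeping about where surjectivity is actually needed — it is indispensable for the index equality via the isomorphism theorem, but is not required for the subgroup property or for normality of the preimage.
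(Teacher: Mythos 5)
Your proof is correct, and it is the standard argument one would expect: the paper itself omits the proof entirely, introducing the lemma as ``a routine exercise,'' so there is no authorial argument to compare against. Your route via the composite $p\circ f:G\to\Gamma/H$ and the first isomorphism theorem is the cleanest way to fill that gap, and your remark about exactly where surjectivity is needed (for the index equality, not for normality of the preimage) is accurate. The only cosmetic point is that the index notation $[f^{-1}(H):G]=[H:\Gamma]$ follows the paper's (reversed) convention; what you actually prove is $|G:f^{-1}(H)|=|\Gamma:H|$, which is what is meant.
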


\begin{proof}[Proof of Proposition \ref{prop:charlarge}] Let $(X,\omega)$ be a large translation surface and let  $\chi_\omega:\shomolzn\longrightarrow \C$ be its period character. By Definition \ref{def:lgt}, the space $(X,\omega)/\aut(X,\omega)$ is a sphere equipped with an abelian differential $\xi$, necessarily meromorphic. Let $\pi:(X,\omega)\longrightarrow (\cp,\,\xi)$ denote the covering projection. 
%It is immediate to observe that the group $\textnormal{Im}(\chi_\xi)$ is generated by the residues at the poles of $\xi$. 
Lemma \ref{lem:goodquot} implies $\omega=\pi^*\xi$ and by Lemma \ref{lem:factor} the period character $\chi_\omega$ factors through $\pi_*$ in homology, that is $\chi_\omega=\chi_\xi\circ \pi_*$. Therefore $\textnormal{Im}(\,\chi_\omega\,)\lhd\textnormal{Im}(\,\chi_\xi\,)\lhd\C$ and the first conclusion follows.
%and $\textnormal{Im}(\chi_\omega)$ is generated by the residues of the poles of $\omega$.
\smallskip

\noindent Let us prove the second assertion. Let $\xi\in\Omega(\cp)$ be a meromorphic differential and let $\chi_\xi$ its period character. Let $\Gamma=\textnormal{Im}(\,\chi_\xi\,)$ and let $H\lhd \Gamma$ be a subgroup of finite index. Notice that, being $H\lhd\Gamma\lhd\C$, both groups are abelian. Let $\Sigma_\xi$ be the set of poles of $\xi$, let $m=|\,\Sigma_\xi\,|$ be its cardinality and define $S_{o,\,m}$ as the punctured sphere $\cp\setminus \Sigma_\xi$. Let $\rho_\xi:\pi_1(S_{0,\,m})\longrightarrow \C$ be the representation obtained by precomposing the period character $\chi_\xi$ with the epimorphism $\pi_1(S_{o,\,m})\longrightarrow \textnormal{H}_1(S_{o,\,m},\,\Z)$ and define $K=\rho^{-1}_\xi(H)\,\lhd\, \pi_1(S_{o,\,m})$. By classical theory, there is a surface $S_{g,n}$, with $\pi_1(S_{g,\,n})=K$, and a $G$-regular covering $\pi:S_{g,n}\longrightarrow S_{o,m}$, where $G=N/K$ and $N$ is the normaliser of $K$ in $\pi_1(S_{o,\,m})$.
%$N=N_{\pi_1(S_{o,\,m})}(K)$.
By Lemma \ref{lem:algprem} above, since $H$ is normal of finite index in $\Gamma$, then $K$ is normal of finite index in $\pi_1(S_{o,\,m})$. Therefore, $N=\pi_1(S_{o,\,m})$ and $\pi$ is a regular $G-$covering with $G=\pi_1(S_{o,\,m})/K$. By Lemma \ref{lem:algprem}, the equality $[\,\pi_1(S_{o,\,m})\,:\,K\,]=[\,\Gamma:H\,]$ holds and then $G\cong\Gamma/H$. The mapping $\pi$ extends to a branched $G$-covering $S_g\longrightarrow \mathbb S^2$ and the pull-back of the natural complex structure of $\cp$ determines a compact Riemann surface $X\in\mathcal{M}_g$. Finally, the abelian differential $\xi\in\Omega(\cp)$ pulls-back to an abelian differential $\omega\in\Omega(X)$ on $X$ of the second or third kind and, by construction, $G\cong\aut(X,\omega)$.
\end{proof}

\noindent As alluded in the introduction, as a consequence of Proposition \ref{prop:charlarge}, a representation $\chi:\shomolzn\longrightarrow\C$ can be realised as the period of some large translation surface if it factors as 
\begin{equation}\label{eq:factor}
    \chi:\shomolzn\overset{\pi_*}{\longrightarrow}\textnormal{H}_1(S_{o,\,m},\Z)\longrightarrow \C,
\end{equation} where the former map is induced by a $G-$covering and the latter is the period character of some abelian differential $\xi\in\Omega(\cp)$. For any such a differential $\xi$, it is an easy matter to verify that the image $\Gamma$ of its period character is a group generated by the residues of $\xi$ at the punctures. This condition leads to examples of representations that does not factor as in \eqref{eq:factor}; \textit{e.g.} a representation $\chi$ such that $\chi(\delta)=0$ for any peripheral loop $\delta\in\shomolzn$ cannot factor as in \eqref{eq:factor} unless it is the trivial one. 

%\smallskip

\subsection{Realising finite groups as groups of translations} In the present subsection we aim to prove our Theorem \ref{thm:main} and its Corollary \ref{cor:main}.

\smallskip

\noindent As already mentioned in the introduction, for a compact Riemann surface $X\in\mathcal{M}_g$, where $g\ge2$, its group of conformal automorphisms $\aut(X)$ is a finite group of cardinality at most $84(g-1)$. Moreover, any finite group can be realised as the subgroup of some $\aut(X)$, see \cite{Hur}. This latter fact is well-known in literature and we include a proof here for the sake of completeness.

\begin{prop}\label{prop:realfinitegroups}
Any finite group can be realised as the subgroup of $\aut(X)$ for some compact Riemann surface $X$.
\end{prop}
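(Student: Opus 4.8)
The plan is to realise $G$ as the deck transformation group of a regular (unbranched) covering of a closed surface, and then transport a complex structure from the base to the total space so that the deck transformations become holomorphic automorphisms. To this end I would write $G=\langle x_1,\dots,x_r\rangle$ for some finite generating set and consider the surface group of genus $r$,
\[
\pi_1(S_r)=\big\langle\, a_1,b_1,\dots,a_r,b_r \;\big|\; \textstyle\prod_{i=1}^r [a_i,b_i]=1 \,\big\rangle.
\]
Define a homomorphism $\varphi:\pi_1(S_r)\longrightarrow G$ by setting $\varphi(a_i)=x_i$ and $\varphi(b_i)=1$ for every $i$. This is well defined, since each commutator $[a_i,b_i]$ maps to $[x_i,1]=1$, so the single defining relation is automatically satisfied; and it is surjective because the $x_i$ generate $G$. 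Let $K=\ker\varphi$, a normal subgroup of finite index $|G|$ in $\pi_1(S_r)$.

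By covering space theory $K$ corresponds to a regular covering $p\colon X_0\longrightarrow S_r$ whose deck transformation group is $\pi_1(S_r)/K\cong G$; since $K$ has finite index, $X_0$ is again a closed surface and the covering is unbranched. Next I would fix any complex structure on $S_r$ (for instance the hyperbolic one) and pull it back along $p$, thereby endowing $X_0$ with the structure of a compact Riemann surface $X$ for which $p$ becomes holomorphic. Each deck transformation $\tau$ satisfies $p\circ\tau=p$, and since $p$ is a local biholomorphism, $\tau$ is holomorphic in local charts, hence $\tau\in\aut(X)$. As the deck action is faithful, this exhibits $G$ as a subgroup of $\aut(X)$.

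The construction involves no genuine obstacle: the only point requiring care is the production of the surjection $\varphi$, which is arranged by sending the $b_i$ to the identity so that the commutator relation is satisfied trivially. I note finally that one is always free to enlarge the generating set, so that we may take $r\ge2$; Riemann--Hurwitz then gives $2g_X-2=|G|(2r-2)$, and hence $X$ may be assumed to have genus $g_X\ge2$, which is the regime relevant for the applications to Theorem \ref{thm:main}.
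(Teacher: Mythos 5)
Your proof is correct and follows essentially the same route as the paper: present $G$ as a quotient of a genus-$r$ surface group via a surjection that kills the commutator relation (the paper sends $b_i\mapsto h_i^{-1}$ rather than $1$, an immaterial difference), pass to the kernel, and realise $G$ as the deck group of the resulting finite unbranched covering, made holomorphic by transporting a complex structure from the base. The paper phrases the last step via uniformization by $\mathbb H$ and the normaliser of $K$, but this is the same construction in substance.
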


\begin{proof}
Let $G$ be a finite group generated by $h_1,\dots,h_g$, for some $g\ge2$, and let $\Gamma$ be the group defined as
\begin{equation}\label{eq:gamma}
    \Gamma=\left\langle a_1,b_1,\dots,a_g,b_g\,\,\big|\,\,[a_1,\,b_1]\cdots[a_g,\,b_g]=1\right\rangle.
\end{equation}
\noindent Let $\phi:\Gamma\longrightarrow G$ be the homomorphism defined as $\phi(a_i)=h_i,\,\phi(b_i)=h_i^{-1}$. If $K$ denotes the kernel of $\phi$ then $G\cong \Gamma/K$. By design, $\Gamma$ is the fundamental group of a compact Riemann surface $Y$ of genus $g$. By uniformization theorem, $Y$ is biholomorphic to the upper-half plane $\mathbb H$ by the action of a discrete group of M\"obius transformations isomorphic to $\Gamma$. If we restrict the action to $K \unlhd F$ we obtain a Riemann surface, say $X=\mathbb H/K$. Notice that $X$ is also compact because it is a finite covering of $Y=\mathbb H/\Gamma$. In fact, the index $[K:\Gamma]=|\,G\,|$ is finite. Recall that every automorphism of $X$ is induced by a biholomorphism of $\mathbb H$ normalising $K$. Since $K$ acts trivially on $\mathbb H/K$ it directly follows that $\aut(X)\cong N_{\aut(\,\mathbb H\,)}(\,K\,)/K$. This latter contains $\Gamma/K$ as a subgroup, hence $G$ is isomorphic to a subgroup of $\aut(X)$ as desired.
\end{proof}

\begin{rmk}\label{rmk:thmfirst}
Inspired from the argument above it would be already possible to show the first part part of Theorem \ref{thm:main}. In fact, let $G$ be a finite group generated by $h_1,\dots,h_g$, for some $g\ge2$. Let $\Gamma$ as in \eqref{eq:gamma}, let $\phi:\Gamma\longrightarrow G$ be the homomorphism defined by $\phi(a_i)=h_i,\,\phi(b_i)=h_i^{-1}$ and let $K$ denote as above the kernel of $\phi$. Since $\Gamma$ is defined as the fundamental group of a surface of genus $g$ and since $K\unlhd\Gamma$, then $K$ is also isomorphic to the fundamental group of a closed surface of genus $h>g$. By construction, the natural injection $K\hookrightarrow \Gamma$ determines a $G$-covering of surfaces $\pi:S_h\longrightarrow S_g$. Let $Y\in\mathcal{M}_g$ be a compact Riemann surface and let $\xi\in\Omega(Y)$ be a meromorphic differential of the second kind (hence $\xi$ is not of the first kind). It is a classical result that any compact Riemann surface admits such differentials; see also \cite{CFG} for direct constructions. Let $X\defeq \pi^*Y\in\mathcal{M}_h$ be the pull-back Riemann surface and let $\omega=\pi^*\xi$. By construction $G$ appears as the group of translations of $(X,\omega)$. It is worth mentioning that $G$ does not necessarily coincides with $\aut(X)$ which is generally bigger than $\aut(X,\omega)$. In fact, according to Proposition \ref{prop:realfinitegroups}, the equality holds if and only if $\Gamma=N_{\aut(\,\mathbb H\,)}(\,K\,)$. We shall provide below a different way for realising $G$ in such a way that $G\cong\aut(X)\cong\aut(X,\omega)$ for some pair $(X,\omega)$. 
\end{rmk}

\noindent In his work \cite{GL}, Greenberg extended Hurwitz's result to non-compact Riemann surfaces. More precisely, he proved that \textit{every} finite group $G$ is \textit{isomorphic} to some $\aut(X)$, where $X$ is a, possibly non-compact, Riemann surface of finite type. Recall that for a surface $S$, being of finite type is equivalent to having a finitely generated fundamental group. In \cite[Theorem 6']{GL2}, Greenberg subsequently strengthened his earlier result as follow:

\begin{prop}\label{prop:everyisautx}
Let $Y$ be a compact Riemann surface of genus $g$ and let $G$ be a non-trivial finite group. Then there exists a normal covering $\phi: X\longrightarrow Y$, whose group of covering transformations is isomorphic to $G$, and it is the full group of conformal automorphisms of $X$.
\end{prop}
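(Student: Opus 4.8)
The plan is to pass to the Fuchsian uniformisation of $Y$ and reduce the statement to a question about normalisers. Since $g\ge 2$, write $Y=\HH/\G$ with $\G<\pslr$ a cocompact surface group, $\G\cong\pi_1(Y)$. For any finite index subgroup $K<\G$ the surface $X=\HH/K$ is a finite covering of $Y$, and---exactly as in the proof of Proposition \ref{prop:realfinitegroups}---every conformal automorphism of $X$ is induced by an element of $\pslr$ normalising $K$, so that
\begin{equation}
    \aut(X)\,\cong\, N_{\pslr}(K)\,/\,K .
\end{equation}
If in addition $K\lhd\G$, then $\G\le N_{\pslr}(K)$ and the deck group $\G/K$ embeds into $\aut(X)$. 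Hence the whole assertion reduces to producing a normal subgroup $K\lhd\G$ of finite index with $\G/K\cong G$ and, crucially, with $N_{\pslr}(K)=\G$.

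First I would dispose of the topological part. Choosing generators of $G$ and sending a suitable generating set of $\G$ onto them while killing the remaining generators---as in Remark \ref{rmk:thmfirst}---produces an epimorphism $\theta\colon\G\twoheadrightarrow G$ whenever the genus is large enough to generate $G$; its kernel $K=\ker\theta$ is then normal of finite index $|G|$ with $\G/K\cong G$. Geometrically $\phi\colon X=\HH/K\to Y$ is a regular unbranched covering with deck group exactly $G$, so that $G\le\aut(X)$. It remains only to forbid any further automorphism, that is, to force $N_{\pslr}(K)=\G$.

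This last step is where I expect the real difficulty to lie, and I would attack it through commensurators. Since $K$ has finite covolume, $N_{\pslr}(K)$ is again a Fuchsian group containing $K$ with finite index; in particular $\aut(X)$ is automatically finite and $\G$ sits between $K$ and $N_{\pslr}(K)$. Because commensurable groups share the same commensurator, one always has
\begin{equation}
    N_{\pslr}(K)\ \subseteq\ \mathrm{Comm}_{\pslr}(K)\ =\ \mathrm{Comm}_{\pslr}(\G).
\end{equation}
Now by Margulis's arithmeticity dichotomy a \emph{generic} compact $Y$ of genus $g\ge2$ is non-arithmetic and is not commensurable to any strictly larger Fuchsian group, so that $\mathrm{Comm}_{\pslr}(\G)=\G$. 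For such a $Y$ the displayed inclusion forces $N_{\pslr}(K)=\G$ for \emph{every} normal $K$ of the required type, and therefore $\aut(X)\cong\G/K\cong G$, as desired.

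The genuinely delicate point is that this clean argument naturally yields a generic $Y$, whereas the statement prescribes $Y$ in advance; when $\G$ is arithmetic its commensurator is dense and the inclusion above gives no bound. In that case I would instead exploit the freedom in the choice of $\theta$: among the finitely many epimorphisms $\G\twoheadrightarrow G$ (up to $\aut(G)$) one seeks one whose kernel $K$ is normalised by no element of $N_{\pslr}(K)\setminus\G$. Writing any potential over-group as a finite extension $A$ with $G\lhd A$ arising from a Fuchsian group $\G'\supsetneq\G$ of smaller orbifold Euler characteristic, one must rule out that $\theta$ extends compatibly along $\G'$; organising these finitely many candidate over-groups and exhibiting a $\theta$ that avoids all of them is, I believe, the crux and the step demanding the most care. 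If a fully arbitrary $Y$ proves recalcitrant, a convenient fallback is to first replace $Y$ by a generic surface of the same genus and then invoke the commensurator argument above.
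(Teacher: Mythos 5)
Your overall strategy---reduce to finding $K\lhd\Gamma$ with $\Gamma/K\cong G$ and $N_{\pslr}(K)=\Gamma$---is the right frame, but two of your steps fail in the generality the statement demands. First, you take $\Gamma$ to be the torsion-free surface group $\pi_1(Y)$ and look for an \emph{unbranched} normal covering. An epimorphism $\pi_1(S_g)\twoheadrightarrow G$ exists only when $G$ can be generated by $2g$ elements (already $G=(\Z/2)^{2g+1}$ is not a quotient of $\pi_1(S_g)$), and for $g\le 1$---which the statement allows---there is no cocompact torsion-free Fuchsian uniformisation to start from. The paper avoids this by letting the covering be branched: it uniformises the \emph{given} $Y$ by a Fuchsian group of signature $(g;\nu_1,\dots,\nu_k)$ whose $k$ elliptic generators $e_1,\dots,e_k$ are mapped onto a generating set $g_1,\dots,g_k$ of $G$ with $g_i$ of order $\nu_i>1$ and $g_1\cdots g_k=1$; the kernel $K$ is torsion-free because the orders match, and $k$ can be made as large as needed by repeating generators.

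Second, your mechanism for forcing $N_{\pslr}(K)=\Gamma$---non-arithmeticity together with $N_{\pslr}(K)\subseteq \mathrm{Comm}_{\pslr}(\Gamma)$---only applies to a \emph{generic} $Y$, as you yourself note, whereas the proposition fixes $Y$ in advance; your fallback for the arithmetic (or otherwise non-maximal) case is left as an unproved ``crux'', and replacing $Y$ by a generic surface of the same genus changes the statement being proved. This is exactly where the paper invokes Greenberg's theorem on maximal Fuchsian groups with prescribed signature: the condition $k>6g-3$ (and $k>6$ when $g\le 1$) guarantees that the prescribed $Y$ can be uniformised by a \emph{maximal} Fuchsian group $\Gamma$ of signature $(g;\nu_1,\dots,\nu_k)$, so that $N_{\aut(\HH)}(K)=\Gamma$ comes for free and $\aut(X)\cong\Gamma/K\cong G$. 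Without an input of this kind your argument does not close.
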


\noindent The gist of the idea for proving this Proposition is similar to that of Proposition \ref{prop:realfinitegroups}. However, it depends on a delicate construction of maximal Fuchsian groups with a given signature \cite[Theorem 4]{GL2}. Once again, by relying on that Theorem, we include here the proof or Proposition \ref{prop:everyisautx} for the sake of completeness. 

\begin{proof}
Let $Y$ be a compact Riemann of genus $g$. Let $G$ be a non-trivial finite group and choose a set of generators $g_1,\dots,g_k$ such that 
\begin{itemize}
    \item[1.] $g_i$ has order $\nu_i>1$,
    \item[2.] $g_1\cdot\,\cdots\,\cdot g_k=1$,
    \item[3.] $k> 6g-3$ ($k>6$ if $g\le1$).
\end{itemize}
This is always possible by repeating generators more than once, and choosing $g_k$ to satisfy the second condition. According to \cite[Theorem 4]{GL2}, there is a Fuchsian group $\Gamma$ of signature $(g;\,\nu_1,\dots,\nu_k)$ such that $\mathbb H/\Gamma\cong Y$. The group $\Gamma$ is defined as 
%has generators $a_i,\,b_i$ for $1\le i\le g$ and $e_j$ for $1\le j\le k$ such that
\begin{equation}
    \Gamma=\left\langle\, a_1,b_1,\dots,a_g,b_g\,e_1,\dots,\,e_k\,\big|\,\,[a_1,\,b_1]\cdots[a_g,\,b_g]\,e_1\cdots e_k=1,\, e_1^{\nu_1}=1,\dots,e_k^{\nu_k}=1\,\right\rangle.
\end{equation}
\noindent The mapping defined $a_1\longmapsto 1,\,\,b_i\longmapsto 1$ and $e_i\longmapsto x_i$ extends to a epimorphism $\phi:\Gamma\longrightarrow G$. Let $K=\textnormal{ker}(\phi)$. Since the elements $e_j$ and $x_j$ have the same order, the group $K$ is torsion-free and $\Gamma/K\cong G$. Since $[K:\Gamma]=|\,G\,|$ is a finite group, then $X=\mathbb H/K$ is a compact Riemann surface and a finite covering of $Y=\mathbb H/\Gamma$. Let $\aut(X)\cong N_{\aut(\mathbb H)}(\,K\,)/K$ be the group of conformal automorphisms of $X$. Since $K\lhd\Gamma$, then $\aut(X)\cong G$ as desired.
\end{proof}

%\smallskip

\noindent Based on this result, we can prove our Theorem \ref{thm:main}; that is we can show that every finite group $G$ appears the group of conformal automorphisms of some Riemann surface $X$ by showing that there always exists a meromorphic (but not holomorphic) differential $\omega$ of the second or third kind such that $\aut(X,\omega)=G$. The first step is to prove the following

\begin{prop}\label{prop:thmatwo}
Let $X$ be a compact Riemann surface of genus $g\ge2$ and let $G$ its group of conformal automorphisms. Then there exists a meromorphic differential $\omega\in\Omega(X)$ with poles of finite orders such that $G=\aut(X,\omega)$.
\end{prop}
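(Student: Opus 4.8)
The plan is to exploit the a priori containment $\aut(X,\omega)\le\aut(X)=G$, which holds for every abelian differential $\omega$ simply because the group of translations is by definition a subgroup of the group of conformal automorphisms. Thanks to this containment, forcing $G=\aut(X,\omega)$ reduces to a single task: to exhibit one meromorphic differential $\omega$ on $X$, carrying at least one pole, that is invariant under the whole of $G$. The reverse inclusion $\aut(X,\omega)\le G$ then comes for free, and equality follows. The natural way to manufacture a $G$-invariant differential is to pull one back from the quotient.

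Concretely, I would let $\pi\colon X\longrightarrow Y=X/G$ be the branched quotient projection and choose a nonzero meromorphic differential $\xi$ on $Y$ having at least one pole. Such a $\xi$ exists on any compact Riemann surface: if $Y$ has genus zero every nonzero element of $\Omega(Y)$ already has poles, while for positive genus one produces poles via Riemann--Roch, equivalently by taking a differential of the third kind or a holomorphic differential multiplied by a suitable meromorphic function. Setting $\omega=\pi^*\xi$, the identity $\pi\circ f=\pi$ for every $f\in G$ gives $f^*\omega=f^*\pi^*\xi=(\pi\circ f)^*\xi=\pi^*\xi=\omega$, so that $G\le\aut(X,\omega)$. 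Combined with $\aut(X,\omega)\le G$ this yields $G=\aut(X,\omega)$.

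The only point that genuinely has to be verified is that $\omega$ really is a differential \emph{with} poles, i.e. that the pullback does not accidentally become holomorphic. This is a local computation: near a pole $q$ of $\xi$ of order $h\ge1$, at a preimage $p$ of ramification index $e$ where $\pi$ reads $w=z^e$, one finds $\pi^*\xi=\big(c\,e\,z^{-e(h-1)-1}+\cdots\big)\,dz$ with $c\neq0$, so $\omega$ acquires a pole of order $e(h-1)+1\ge1$ at $p$. Hence $\omega$ always carries poles of finite order and is a differential of the second or third kind. No competing difficulty arises from \emph{extra} automorphisms, precisely because $\aut(X,\omega)$ is trapped inside $G$ from the outset; this is what makes the construction robust, and it is really the whole content of the argument. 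I would finally remark that the passage to a genuinely meromorphic $\omega$ is unavoidable whenever $|G|>4g-4$, since by Proposition \ref{prop:autchar} a $G$-invariant holomorphic differential would force $|G|\le 4g-4$, a contradiction.
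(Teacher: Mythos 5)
Your proposal is correct and follows essentially the same route as the paper: pull back a meromorphic differential $\xi$ with poles from the quotient $Y=X/G$ to get a $G$-invariant $\omega=\pi^*\xi$, and combine $G\le\aut(X,\omega)$ with the a priori containment $\aut(X,\omega)\le\aut(X)=G$. You supply slightly more detail than the paper (the local computation showing the pullback genuinely retains poles, and the explicit two-inclusion argument), but the construction is identical.
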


\begin{proof}
Let $X$ be a compact Riemann surface, let $G=\aut(X)$ its group of conformal automorphisms and define $Y=X/G$. Let $\pi:X\longrightarrow Y$ denotes as above the covering projection. Let $\xi$ be a meromorphic differential of the second kind on $Y$. Then $\pi^*\xi=\omega$ is an abelian differential on $X$ of the same kind of $\xi$. By construction we obtain $G=\aut(X,\omega)$ as desired.
\end{proof}

\begin{rmk}
If $Y$ has positive genus, one may take $\xi$ as an abelian differential of the first kind (recall that there are no abelian differentials of the first kind in genus zero) and the same argument works. In this case, $\omega=\pi^*\xi$ would be an abelian differential of the first kind on $X$ such that $G=\aut(X,\omega)$. As a straightforward consequence of Riemann-Hurwitz Theorem, if $Y$ has positive genus then $|\,G\,|\le 4g-4$ and hence there is no contradiction with Proposition \ref{prop:autchar} and \cite[Theorem 1.1]{SPWS}.
\end{rmk}

\noindent As a direct consequence of Proposition \ref{prop:everyisautx} and Proposition \ref{prop:thmatwo} we obtain the following

\begin{cor}\label{prop:thmaone}
Every finite group appears as the group of translations of some translation surface with poles.
\end{cor}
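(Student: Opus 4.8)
The plan is to obtain the statement as a direct assembly of Proposition \ref{prop:everyisautx} and Proposition \ref{prop:thmatwo}, with only the trivial group requiring separate (and easy) attention. The genuinely new content lies entirely in those two propositions, so the task here is essentially bookkeeping: match hypotheses and read off the conclusion.

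First I would dispose of the trivial group, which is excluded from Proposition \ref{prop:everyisautx}. Since a generic compact Riemann surface of genus at least $3$ has no non-trivial conformal automorphism, I would fix such an $X$, so that $\aut(X)=\{1\}$, and apply Proposition \ref{prop:thmatwo} with $G=\{1\}$. This produces a meromorphic differential $\omega$ of the second kind with $\aut(X,\omega)=\{1\}$; choosing $\xi$ (hence $\omega$) genuinely with poles, the pair $(X,\omega)$ is a translation surface with poles realising the trivial group.

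Now let $G$ be a non-trivial finite group. I would fix any compact Riemann surface $Y$ and invoke Proposition \ref{prop:everyisautx} to produce a normal, possibly branched, covering $\phi\colon X\longrightarrow Y$ whose deck group is isomorphic to $G$ and such that $G$ is the \emph{full} group of conformal automorphisms of $X$, i.e. $\aut(X)\cong G$. Here I would record that the surface $X$ built in that proposition is $\mathbb H/K$ with $K$ torsion-free and cocompact; it therefore carries a hyperbolic structure, so its Euler characteristic is negative and its genus is at least $2$. In particular the hypotheses of Proposition \ref{prop:thmatwo} are met.

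Finally I would apply Proposition \ref{prop:thmatwo} to this $X$. Writing $G=\aut(X)$, setting $Y'=X/G$ with covering projection $\pi\colon X\to Y'$, and choosing on $Y'$ a meromorphic differential $\xi$ of the second kind with at least one genuine pole, the pullback $\omega=\pi^*\xi$ is a meromorphic, non-holomorphic differential on $X$. Since $\pi$ is the $G$-regular quotient map, every $g\in G$ satisfies $\pi\circ g=\pi$ and hence $g^*\omega=g^*\pi^*\xi=\pi^*\xi=\omega$, giving $G\le\aut(X,\omega)$; conversely $\aut(X,\omega)\le\aut(X)=G$, whence $G=\aut(X,\omega)$. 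Thus $(X,\omega)$ is a translation surface with poles whose group of translations is exactly $G$. I do not expect any real obstacle beyond the two points already flagged, namely the separate treatment of the trivial group and the observation that the surface furnished by Proposition \ref{prop:everyisautx} automatically has genus at least $2$.
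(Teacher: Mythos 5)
Your proposal is correct and follows exactly the paper's route: the paper derives this corollary as a direct consequence of Proposition \ref{prop:everyisautx} combined with Proposition \ref{prop:thmatwo}, which is precisely your assembly. Your two added checks (handling the trivial group via a generic genus $\ge 3$ surface, and noting that the surface $\mathbb H/K$ from Proposition \ref{prop:everyisautx} automatically has genus $\ge 2$) are sound and fill in details the paper leaves implicit.
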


\noindent Proposition \ref{prop:thmatwo} and its Corollary \ref{prop:thmaone} imply our main Theorem. Finally, we provide a proof of Corollary \ref{cor:main}. We recall the following

\begin{defn}\label{def:hursfs}
A compact Riemann surface $X$ is called \textit{Hurwitz surface} if its group of conformal automorphisms $\aut(X)$ attains the maximal bound provided by Hurwitz, namely $|\aut(X)\,|=84(g-1)$.
\end{defn}

\begin{rmk}
According to \cite[Chapter \S5]{JW}, the maximal bound of $\aut(X)$ is attained if and only if $X\cong \mathbb H/K$, where $K\lhd\Delta(2,3,7)$ of finite index, where $\Delta(2,3,7)$ denotes the smallest triangle group of hyperbolic type.
\end{rmk} 

\begin{proof}[Proof of Corollary \ref{cor:main}] Let $X$ be a Hurwitz surface. Its group of conformal automorphisms $\aut(X)$ has by definition cardinality $84(g-1)$, where $g\ge2$ is the genus of $X$, and $X/\aut(X)\cong\cp$. Let $\xi$ an abelian differential on $\cp$, necessarily of second or third kind, and let $\omega=\pi^*\xi$. Then $(X,\omega)$ is a translation surface with poles and $|\aut(X,\omega)\,|=84(g-1)$ by construction.
\end{proof}

\noindent We may notice that the period character of $\omega\in\Omega(X)$ is trivial whenever $\xi\in\Omega(\cp)$ is a meromorphic differential of the second kind.

%\smallskip

\section{Automorphism groups of branched projective structures}\label{sec:autbps} 

\noindent We finally introduce the notion of branched projective structure on closed surfaces. The purpose of the present subsection is to extend our previous results to this type of structures. In fact, under certain conditions, a translation surface with poles extend to a branched projective structure on the same topological surface. We begin by introducing projective structures on surfaces.

\subsection{Branched projective structures}\label{ssec:bps} Let $S_{g}$ be a surface of genus $g$ and negative Euler characteristic, that is $2-2g<0$. Let $\cp$ be the Riemann sphere and let $\pslc$ be its group of conformal automorphisms acting by M\"{o}bius transformations.
\begin{equation}
    \pslc\,\times\,\cp\longrightarrow \cp, \quad \begin{pmatrix} a & b\\ c & d \end{pmatrix},\, z\longmapsto \frac{az+b}{cz+d}.
\end{equation}

\smallskip

\begin{defn}\label{def:bps}
A \textit{projective structure} $\sigma$ on $S_{g}$ is the datum of a maximal atlas of local $\cp-$charts of the form $z\longmapsto z^{k+1}$, for some $k\ge0$, and transition functions given by M\"{o}bius transformation on their overlapping. The structure $\sigma$ is called \textit{unbranched} if $k=0$ for every local chart otherwise it is called \textit{branched}. A point $p\in S_{g}$ will be called a \textit{branch point} of order $k\in\Z^+$ if any local chart at $p$ is a branched cover of degree $k+1$, that is if it looks like of the form $z\longmapsto z^{k+1}$. Notice that the order of any point does not depend on choice of the local chart and hence $k$ is always well-defined.
\end{defn}

%\noindent A point $p\in S_{g,n}$ will be called a \textit{branch point} of order $k\in\Z^+$ if any local chart at $p$ is a branched cover of degree $k+1$, \textit{i.e.} if it looks like of the form $z\longmapsto z^{k+1}$. Notice that the order $k$ does not depend on choice of the local chart.

\smallskip 

\noindent A branched projective structure is always specified by a \textit{developing map} $\textnormal{dev}:\widetilde{S}_{g}\longrightarrow \cp$, obtained by analytic continuation of local charts, and by a \textit{holonomy representation} $\rho:\pi_1(S_{g})\longrightarrow \pslc$ such that
\begin{equation}
    \textnormal{dev}\left(\gamma\cdot \widetilde{p}\right)=\rho(\gamma)\cdot \textnormal{dev}\left(\widetilde{p}\right);
\end{equation}
\noindent where $\gamma\in\pi_1(S_{g})$. We refer to \cite{DD} for a nice survey about unbranched projective structures; for more about the geometry of the spaces of these structures see \cite{Far}. Notice that for branched projective structures there is a well-defined notion of angle induced by the underlying conformal structure. The total angle around a regular point is $2\pi$ whereas for a branch point, say $p$, the total angle around it has magnitude $2k\,\pi$,  where $k\ge2$ is the degree of any chart at $p$. Finally, notice that Definition \ref{def:bps} naturally extends to branched projective structure on punctured surfaces $S_{g,n}$.

\begin{ex}\label{ex:ups}
For an open set $\Omega\subset \cp$ preserved by a  $\Gamma<\pslc$ acting freely and properly discontinuously, the quotient space $\Omega/\Gamma$ has a natural unbranched projective structure in which the charts are local inverses of the covering $\Omega\longrightarrow \Omega/\Gamma$. By the classical uniformization theory, any Riemann surface $X$ is of the form $\Omega/\Gamma$ where $\Omega$ is an open subset of $\cp$ and $\Gamma$ is a discrete sub-group of $\pslc$ acting freely and properly discontinuously on $\Omega$. This endows $X$ with a natural unbranched projective structure, here denoted with $\sigma_{X}$, coming from the identification $X\cong\Omega/\Gamma$. For surfaces of genus at least two; in the case $X\cong\mathbb H/\Gamma$ where $\mathbb H$ is the upper half-plane and $\Gamma<\pslr<\pslc$ is cocompact and torsion-free, we call the projective structure such defined as \textit{Fuchsian uniformization}. 
\end{ex}

\begin{ex} Let $X\in\mathcal{M}_g$ be a compact Riemann surface. Given $p_1,\dots,p_n\in X$ and $k_1,\dots,k_n\in\Z^+$, in \cite{TM}, Troyanov showed that there exists on $S_g\setminus\{p_1,\dots,p_n\}$ a unique conformal flat Riemannian metric with constant curvature equal to $0$ and branched points of angle $2\pi(k_i+1)$ at $p_i$ if 
\begin{equation}\label{eq:troy}
   2-2g+\sum_{i=1}^n k_i=0.
\end{equation}
Any structure obtained in this way provide an example of branched projective structures. As we shall see below, translation surfaces also belong to this class of examples.
\end{ex}

\smallskip

\noindent A branched projective structure $\sigma$ on a surface $S_g$ restricts to an unbranched structure, say $\sigma^*$, on a punctured surface $S_{g,n}$. In a nutshell, we consider a branched projective structure $\sigma$ on $S_g$ and we eventually obtain $\sigma^*$ by removing all the branch points. Since transition functions are holomorphic mappings, any branched projective structure determines an underlying complex structure $X^*$ on $S_{g,n}$ obtained by extending the maximal atlas for $\sigma^*$ to a maximal atlas of complex charts. In turns, $X^*$ extends to a compact Riemann surface $X$. Therefore a projective structure always determines a compact Riemann surface.

\smallskip

\noindent Conversely, a projective structure over a complex structure $X^*$ on $S_{g,n}$ can be seen as a choice of a sub-atlas of unbranched charts, say $\sigma^*$, maximal in the sense of Definition \ref{def:bps}. In turns, this latter extends to a, possibly branched, projective structure $\sigma$ on $S_g$ if the holonomy of each puncture is trivial. A maximal atlas of complex charts generally encapsulates several, in fact uncountably many, choices of complex projective atlases. In other words, there are infinitely many branched projective structures with the same underlying Riemann surface $X$. 

\smallskip

\subsection{Translation surfaces as branched projective structures}\label{ssec:tswpasbps} \noindent In section \S\ref{sec:tswp} we have introduced translation surfaces as complex-analytic objects; however they can be defined in a more geometric-flavor language as follows. The interests for translation surfaces is due to the interesting geometric and dynamical properties they exhibit. In the present note we shall limit ourselves to the necessary notions and we refer to the nice surveys \cite{WA} and \cite{Zo} for more details and recent advances.

\smallskip

\noindent Let $X\in\mathcal{M}_g$ be a compact Riemann surface and let $\omega\in\Omega(X)$. Let us denote by $\textnormal{Sing}(\omega)$ the set of singularities of $\omega$, \textit{i.e.} the set of zeros and poles (if any). Any abelian differential $\omega$ determines an Euclidean metric on $X\setminus \textnormal{Sing}(\omega)$. More precisely, in a neighborhood of a point $p\in X\setminus \textnormal{Sing}(\omega)$ we can define a local coordinate as
\begin{equation}
    z(q)=\int_p^{\,q} \omega 
\end{equation}  
in which $\omega=dz$ and the coordinates of two overlapping neighborhoods differ by a translation $z\mapsto z+c$ for some $c\in\mathbb C$, \textit{i.e.} a translation. This Euclidean metric naturally extends to a singular Euclidean metric over the set of zeros of $\omega$. In fact, if $p\in X$ is a zero for $\omega$ of order $k\ge1$ and $U$ is any open neighborhood of $p$, then there exists a local $z$ such that $\omega=z^k\,dz$. The point $p$ is a \textit{branch point} for $(X,\omega)$ because any local chart around it is locally a simple branched $k+1$ covering over $\C$. Let $\Sigma_\omega\subset X$ denotes the set of poles of $\omega\in\Omega(X)$. We have showed that

\begin{lem}\label{lem:equiv}
A translation surface $(X,\omega)$ always determines on $X^*=X\setminus \Sigma_\omega$ a maximal atlas of local complex-valued charts of the form $z\longmapsto z^{k+1}$, for some $k\ge0$, and transition functions given by translations on their overlappings.
\end{lem}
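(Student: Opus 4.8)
The plan is to assemble the required atlas by hand from the two local models already identified in the discussion preceding the statement, and then to check that every transition map between overlapping charts is a translation. I would treat the regular points of $X^*$ (those which are neither zeros nor poles, recalling that the poles have been deleted in passing to $X^*$) separately from the zeros of $\omega$, since these give charts with $k=0$ and with $k\ge1$ respectively.

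First, on the open set obtained from $X^*$ by further removing the zeros of $\omega$, I would use the flat coordinates $z(q)=\int_p^q\omega$ already introduced. Each such coordinate is a local biholomorphism onto an open subset of $\C$ satisfying $\omega=dz$, hence a chart of the form $z\mapsto z^{k+1}$ with $k=0$. Because any two primitives of $\omega$ differ by the integral of $\omega$ along a path joining their base points, the corresponding change of coordinate has the form $z\mapsto z+c$ for a constant $c\in\C$; thus every transition between two such charts is a translation.

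Next, at a zero $p$ of order $k\ge1$ I would invoke the standard local normal form of a holomorphic $1$-form: there is a coordinate $z$ centered at $p$ with $\omega=z^k\,dz$, and after a linear rescaling of $z$ the primitive of $\omega$ vanishing at $p$ becomes exactly $z^{k+1}$. This exhibits a chart of the required branched form $z\mapsto z^{k+1}$ in which the developed coordinate is again a primitive of $\omega$.

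It remains to verify the transitions involving a chart centered at a zero, and this is the one genuinely delicate point. The crucial observation is that $\omega$ is holomorphic at $p$, so it has vanishing residue there and the period of $\omega$ around $p$ is zero; consequently the primitive $\int\omega$ is single-valued on a punctured neighborhood of $p$ and agrees, up to an additive constant, with the flat coordinate of any overlapping regular chart. Hence each such transition is again a translation. Passing to the maximal atlas containing all these charts yields the claim, the exclusion of the poles guaranteeing that every point of $X^*$ admits one of the two models. I expect the main obstacle to be precisely this last compatibility check, where the vanishing of the local period must be used to reconcile the branched chart $z\mapsto z^{k+1}$ with the requirement that the overlaps be translations.
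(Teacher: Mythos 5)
Your proof is correct and follows essentially the same route as the paper's own discussion preceding the lemma: flat coordinates $z(q)=\int_p^q\omega$ at regular points, the normal form $\omega=z^k\,dz$ (with primitive $z^{k+1}$ after rescaling) at a zero of order $k$, and translations as transitions because any two local primitives of $\omega$ differ by a constant. Your explicit verification of the overlap between a branched chart and a regular chart, via the vanishing of the local period at a zero, is a compatibility check the paper leaves implicit, but it does not constitute a different approach.
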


\begin{rmk}
It is worth mentioning that the converse is also true. A maximal atlas of $\C$-valued function charts that differ by translations on their overlapping always defines a pair $(X,\omega)$ on a surface $S_g$. In fact, since the change of coordinates are holomorphic maps there is a well-defined underlying compact Riemann surface $X$. The abelian differential $\omega$ is then obtained by extending the pull-backs of $dz$ via the local charts. In what follows we shall not need to rely on this implication.
\end{rmk} 

\noindent Analytic continuation of any local chart yields a developing map $\textnormal{dev}:\widetilde{S}_g\longrightarrow \C$ equivariant with respect to a holonomy representation $\rho:\pi_1(S_g)\longrightarrow \C$; where the group of translations of $\C$ identifies with $\C$ itself. Since $\C$ is an abelian group, it is not hard to verify that the representation $\rho$ boils down to a representation $\chi:\shomolz\longrightarrow \C$ which coincides with the period character of $\omega$ as defined in \eqref{eq:per}.

\medskip

\noindent As defined, any translation surface, say $(X,\omega)$ corresponding to an abelian differential of the first kind provides an example of branched projective structure $\sigma$ on $S_g$ where the branch points correspond to the zeros of $\omega$. The Gauss-Bonnet condition, seen in Remark \ref{rmk:gbcond}, forces a translation surface to have finitely many branch points each of finite order. Moreover, their orders sum up to $2g-2$. 

\begin{rmk}
The Gauss-Bonnet condition however does not longer hold for branched projective structures. In fact, it is possible to apply some surgeries to increase the number as well as the orders of the branch points without changing the topology of the surface, see \cite[Bubblings]{GKM}. In particular, this surgery applied to a translation surface turns it into a projective structure which is no longer a translation surface, see \cite[Section \S12]{GKM} for more details.
\end{rmk}

\smallskip

\noindent On the other hand, translation surfaces corresponding to abelian differentials of the second or third kind provide examples of branched projective structures on a punctured surface $S_{g,n}$, where $n\ge1$ is the number of poles. The key observation here is that any translation surface with poles determined by a meromorphic differential of the second kind on a compact Riemann surface $X$ always extends to a branched projective structure on $S_g$ which is no longer a translation surface. In fact, let $\omega$ be an abelian differential of the second kind on $X$ and consider a pole $p\in X$ of order $h+2\ge2$. Let $U$ be an open neighborhood of $p$ and choose an appropriate local coordinate $z$ such that
\begin{equation}
    \omega=\frac{dz}{z^{h+2}}\quad \text{ on } U
\end{equation}
\noindent around $p$. Notice that $U$ is biholomorphic to the punctured disk $\mathbb D^*$. By applying the change of coordinate $\zeta=z^{-1}$, the differential $\omega$ has now a zero of order $h$ and a local chart $\varphi:(U,\,\zeta) \longrightarrow \mathbb C$ is $h+1-$fold covering over the puncture disk. Equivalently, the coordinate neighborhood $(U,\,\zeta)$ is biholomorphic to a neighborhood of the vertex of the Euclidean cone of angle $2(h+1)\pi$ to which the conical singularity has been removed. As a direct application of Riemann extension theorem, the local chart $\varphi$ extends over the point $p$ with $\varphi(p)=0\in \mathbb D$. Finally, by changing a change of coordinates again, that is $z=\zeta^{-1}$, we obtain a chart around $p$ modelled at $\infty\in\cp$.

\begin{rmk}
We cannot deduce the geometry around the pole from this model because the change of coordinate $\zeta=\frac1z$ is not a translation and so the geometry has been altered. However it gives a glimpse of what the geometry should be. The mapping $z\mapsto\frac1z$ is an inversion and hence the geometry around a pole is that of the exterior of a compact neighborhood of $0$ in the Euclidean cone $(\C,\, |\,z\,|^{2h}|\,dz\,|^2)$ of angle $2(h+1)\pi$.
\end{rmk}

\smallskip

\noindent As a consequence, for a translation surface, any maximal atlas of  complex-valued charts, whose existence is ensured by Lemma \ref{lem:equiv}, can be enlarged to a maximal atlas of $\cp-$valued charts by adding charts over the poles of $\omega$. The resulting structure is no longer a translation surface because there are charts taking the value $\infty\in\cp$ and hence the associated developing map is now a holomorphic function $\textnormal{dev}:\widetilde{S}_g\longrightarrow \cp$ (and not $\C$). We just showed the following

\begin{lem}\label{lem:exte}
Let $X\in\mathcal{M}_g$ be a compact Riemann surface and $\omega\in\Omega(X)$ be an abelian differential of the second kind. Then $(X,\omega)$ extends to a branched projective structure $\sigma$ on $S_g$.
\end{lem}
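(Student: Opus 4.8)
The plan is to promote the translation atlas produced by Lemma~\ref{lem:equiv} to a $\cp$-valued atlas with M\"obius transition functions, the only new ingredient being charts defined over the poles. By Lemma~\ref{lem:equiv}, on $X^{*}=X\setminus\Sigma_\omega$ the differential $\omega$ determines a maximal atlas of $\C$-valued charts of the form $z\mapsto z^{k+1}$ whose transition functions are translations $w\mapsto w+c$. As every translation lies in $\pslc$, this atlas already presents $X^{*}$ as a branched projective structure, branched exactly at the zeros of $\omega$ (a zero of order $m$ giving a branch point of order $m$, since the primitive there behaves like $z^{m+1}$). It thus remains to adjoin, for each pole $p\in\Sigma_\omega$, a chart defined on an entire neighbourhood of $p$ and taking values in $\cp$, and to check that the transitions of the enlarged atlas stay M\"obius.

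The decisive step---and the reason the hypothesis \emph{second kind} is indispensable---is the extension of the local primitive across a pole. Fix $p\in\Sigma_\omega$; being of the second kind, $\omega$ has at $p$ a pole of order $h+2\ge2$ with vanishing residue (a simple pole would have non-zero residue). In a coordinate $z$ centred at $p$ the condition $a_{-1}=0$ means the Laurent expansion of $\omega$ has no $z^{-1}\,dz$ term, so the local primitive $F(z)=\int^{z}\omega$ carries no logarithmic summand and is a \emph{single-valued} meromorphic function with a pole of order $h+1$ at $p$. Hence $F$ extends to a holomorphic map $U\to\cp$ with $F(p)=\infty$. Equivalently, the puncture carries trivial holonomy; for a differential of the third kind the residue would contribute a non-trivial translation holonomy and the extension would fail.

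Finally I would confirm that the enlarged atlas is projective. Post-composing $F$ with the inversion $\iota\colon w\mapsto 1/w\in\pslc$ yields a holomorphic chart $\iota\circ F$ near $p$ with a zero of order $h+1$, hence of the normal form $z\mapsto z^{h+1}$; thus $p$ becomes a branch point of order $h$ (a double pole, $h=0$, producing a regular point). On any overlap with a translation chart of $X^{*}$ the transition of this new chart is a translation followed by $\iota$, which again lies in $\pslc$. The translation charts on $X^{*}$ together with these charts over the poles therefore generate a maximal atlas of $\cp$-valued charts with M\"obius transitions and branch points at the zeros and poles of $\omega$, so by Definition~\ref{def:bps} they define a branched projective structure $\sigma$ on $S_g$ extending $(X,\omega)$. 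The whole argument is concentrated in the middle step: it reduces to the removability of the logarithmic term, i.e.\ to the vanishing residue built into the second-kind hypothesis.
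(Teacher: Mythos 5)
Your argument is correct and is essentially the paper's own: the paper likewise keeps the translation atlas on $X\setminus\Sigma_\omega$ and, at each pole of order $h+2$, produces an $(h+1)$-fold branched chart that is then carried to $\infty\in\cp$ by the inversion $z\mapsto z^{-1}$. The only cosmetic difference is that the paper starts from the normal form $\omega=dz/z^{h+2}$ (which already encodes the vanishing residue) and changes coordinates in the source before invoking the Riemann extension theorem, whereas you work directly with the primitive $F$ and make explicit that the second-kind hypothesis is exactly what removes the logarithmic term and keeps $F$ single-valued.
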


\noindent We can now move to show Corollary \ref{cor:bps} which turns out a direct consequence of this latter Lemma along with the results of Section \S\ref{sec:agtswp}.

\smallskip

\subsection{Realising groups as groups of projective automorphisms} We are going to conclude by providing a quick proof of Corollary \ref{cor:bps} based on our previous discussion above. We now introduce the following terminology as done in Section \S\ref{sec:tswp} and \cite[Section \S2]{FR}. Let $\sigma_1,\,\sigma_2$ be branched projective structures on $S_g$ and let $f:\sigma_1\longrightarrow\sigma_2$ be a diffeomorphism. We say that $f$ is \textit{projective} if its restrictions to local projective charts are given by elements in $\pslc$. We say that $\sigma_1$ and $\sigma_2$ are isomorphic if there exists a projective diffeomorphism between them.

%\begin{defn}Let $\sigma_1,\,\sigma_2$ be branched projective structures on $S_g$ and let $f:\sigma_1\longrightarrow\sigma_2$ be a diffeomorphism. We say that $f$ is \textit{projective} if its restrictions to local projective charts are given by elements in $\pslc$. We say that $\sigma_1$ and $\sigma_2$ are isomorphic if there exists a projective diffeomorphism between them.\end{defn}

\begin{defn}
Let $\sigma$ be a branched projective structure on $S_g$. We define the group of \textit{projective automorphisms of} $\sigma$ to be 
\begin{equation}\label{eq:transgroup2}
    \aut(X, \sigma)=\left\{\,f\in \textnormal{Diff}^+(S_g)\,\,\big|\,\, f\,\text{ is projective for }\sigma\,\right\},
\end{equation} where $X$ denotes the underlying Riemann surface.
\end{defn}

\noindent Notice that $\aut(X,\sigma)$ is also a subgroup of $\aut(X)$ and hence the former is also subject to the Hurwitz bound $84(g-1)$. Our aim here is to show that this bound is sharp by providing examples of branched projective structures that attain the expected maximal bound. The group $\aut(X,\sigma)$ of projective automorphisms does not need to coincide with the overall group $\aut(X)$ and it is generally harder to determine whether the equality
\begin{equation}\label{eq:rhps}
    \aut(X,\sigma)=\aut(X)
\end{equation}
holds. Whenever the equality \eqref{eq:rhps} is satisfied, we shall say that $(X,\sigma)$ is a \textit{relatively Hurwitz projective structure}, see \cite[Definition 2.14]{FR}. A \textit{Hurwitz projective structure} $(X,\sigma)$ is a projective structure such that $|\aut(X)\,|=|\aut(X,\sigma)\,|=84(g-1)$. For \textit{unbranched} projective structures on surfaces of genus $g\ge2$, in \cite{FR} the authors showed that the Fuchsian uniformization of a compact Riemann surface $X$, see Example \ref{ex:ups}, is always a relatively Hurwitz projective structure and, in particular, it is the only one if and only if $X$ is a Galois-Bely\u{\i} curve, see \cite{JW} for more details about these curves. Nevertheless, for any compact Riemann surface $X$ there exists a branched projective structure $\sigma$ such that the equality \eqref{eq:rhps} holds. In fact, Proposition \ref{prop:thmatwo} and Lemma \ref{lem:exte} imply the analogue of the second statement of Theorem \ref{thm:main} for projective structures.

\begin{prop}
For every compact Riemann surface $X$ there exists a branched projective structure $\sigma$ such that $\aut(X)=\aut(X,\sigma)$.
\end{prop}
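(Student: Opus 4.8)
The plan is to combine the two ingredients that the paper has already established into a short proof. By Proposition~\ref{prop:thmatwo}, for every compact Riemann surface $X$ of genus $g\ge2$ with $G=\aut(X)$, there exists a meromorphic differential $\omega\in\Omega(X)$ with poles of finite orders such that $G=\aut(X,\omega)$. The strategy is to pass from this translation surface with poles to a branched projective structure and to verify that the group of translations is carried faithfully onto the group of projective automorphisms of the resulting structure.

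First I would invoke Proposition~\ref{prop:thmatwo} to obtain a differential $\omega$ on $X$ with $\aut(X)=\aut(X,\omega)$. Since $Y=X/G$ has genus zero in the interesting cases, $\omega$ is necessarily meromorphic, and by choosing $\xi$ on $Y$ to be of the second kind (as in the construction of Proposition~\ref{prop:thmatwo}) the pull-back $\omega=\pi^*\xi$ is again a differential of the second kind. This is exactly the hypothesis needed for Lemma~\ref{lem:exte}: an abelian differential of the second kind on a compact Riemann surface extends to a branched projective structure $\sigma$ on $S_g$, with the same underlying Riemann surface $X$ and with the poles of $\omega$ becoming ordinary points modelled at $\infty\in\cp$.

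Next I would compare the two automorphism groups. On the one hand, every translation $f\in\aut(X,\omega)$ is an isometry of the flat metric determined by $\omega$ whose restriction to local charts is a translation $z\mapsto z+c$; since translations are M\"obius transformations, $f$ is projective for the extended structure $\sigma$, giving the inclusion $\aut(X,\omega)\le\aut(X,\sigma)$. On the other hand, both groups sit inside $\aut(X)$, and by construction $\aut(X,\omega)=\aut(X)$, so the chain $\aut(X)=\aut(X,\omega)\le\aut(X,\sigma)\le\aut(X)$ forces equality throughout. Hence $\aut(X)=\aut(X,\sigma)$, which is the desired conclusion.

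The only subtle point — and the step I would treat most carefully — is verifying that the extension procedure of Lemma~\ref{lem:exte} is equivariant, so that a translation $f$ really does extend to a projective automorphism of $\sigma$ across the added charts over the poles. Because the charts at the poles are constructed canonically from the local geometry of $\omega$ (via $\zeta=z^{-1}$ and Riemann's extension theorem), and because $f$ permutes the poles while respecting this local structure, the extended map remains projective at every added chart. I do not expect a genuine obstacle here, since the construction in Lemma~\ref{lem:exte} depends only on $\omega$ and $f$ preserves $\omega$; the inclusion $\aut(X,\omega)\le\aut(X,\sigma)$ therefore holds, and the squeeze between the two copies of $\aut(X)$ completes the argument.
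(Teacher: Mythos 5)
Your proof is correct and takes essentially the same route as the paper, which simply observes that Proposition~\ref{prop:thmatwo} combined with Lemma~\ref{lem:exte} yields the statement. Your explicit check that a translation extends projectively across the charts added over the poles, together with the squeeze $\aut(X)=\aut(X,\omega)\le\aut(X,\sigma)\le\aut(X)$, merely fills in details the paper leaves implicit.
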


\noindent The following is the analogue of the first statement of Theorem \ref{thm:main} for projective structures and it is a straightforward consequence of Corollary \ref{prop:thmaone}. 

\begin{prop}
Every finite group appears as the group of projective automorphisms of some branched projective structure.
\end{prop}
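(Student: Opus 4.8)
The plan is to derive this statement as a direct corollary of the machinery already assembled in the paper, exactly parallel to how Corollary \ref{prop:thmaone} follows from Proposition \ref{prop:everyisautx} and Proposition \ref{prop:thmatwo}. The target asserts that an \emph{arbitrary} finite group $G$ is realised as $\aut(X,\sigma)$ for some branched projective structure $\sigma$. The key observation is that Corollary \ref{prop:thmaone} already produces, for any finite group $G$, a translation surface with poles $(X,\omega)$ whose group of translations is exactly $G$; the remaining work is purely to transport this realisation from the translation-surface category to the projective-structure category without enlarging or shrinking the automorphism group.

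First I would invoke Corollary \ref{prop:thmaone} (equivalently Proposition \ref{prop:thmatwo}) to fix a compact Riemann surface $X$ of genus $g\ge2$ and a meromorphic abelian differential $\omega\in\Omega(X)$ with $G=\aut(X,\omega)$. Since the construction in Proposition \ref{prop:thmatwo} pulls back a differential $\xi$ of the \emph{second} kind on $Y=X/G$, the resulting $\omega=\pi^*\xi$ is itself of the second kind. This is precisely the hypothesis needed for Lemma \ref{lem:exte}, so I would next apply that lemma to extend $(X,\omega)$ to a branched projective structure $\sigma$ on $S_g$ whose underlying Riemann surface is $X$.

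The main step, and the point requiring genuine care, is to verify that passing from $\omega$ to $\sigma$ does not change the automorphism group, i.e.\ that $\aut(X,\sigma)=\aut(X,\omega)=G$. One inclusion is immediate: a translation automorphism $f$ of $(X,\omega)$ acts in the chosen local charts by $z\mapsto z+c$, which is a M\"obius transformation, so $f$ is projective for $\sigma$ and hence $\aut(X,\omega)\le\aut(X,\sigma)$. For the reverse inclusion I would argue that any projective automorphism of $\sigma$ must preserve the distinguished data carried by $\omega$: the branch points and poles of the structure are intrinsic, and away from them the developing map into $\C\subset\cp$ is determined up to the translation holonomy, so a projective diffeomorphism permuting charts that take values in $\C$ (as opposed to the charts modelled at $\infty$, which sit over the poles) must act as a translation, hence as a translation automorphism of $(X,\omega)$. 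The possible obstacle is that a M\"obius transformation is a priori more flexible than a translation, so I must rule out that the extra freedom in $\pslc$ creates spurious projective automorphisms that do not descend from $\aut(X,\omega)$; this is resolved by noting that the sets of poles (charts at $\infty$) and of finite branch points are preserved, pinning the allowable M\"obius maps down to the translation subgroup fixing $\infty$ with trivial scaling forced by the matching of the local translation structure.

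Assembling these observations gives $\aut(X,\sigma)=G$, which is exactly the assertion. I expect the write-up to be short, since every ingredient is in place; the only delicate point, as noted, is the second inclusion $\aut(X,\sigma)\le\aut(X,\omega)$, and I would phrase it as a brief remark that projective automorphisms preserving the pole set and restricting to translations on the complement of the poles are precisely the translation automorphisms of the underlying translation surface with poles.
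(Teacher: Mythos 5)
Your overall route is the same as the paper's: realise $G$ as $\aut(X,\omega)$ for a translation surface with poles whose differential is of the second kind, then extend to a branched projective structure by filling in the poles with $\cp$-valued charts. The only cosmetic difference is the order of operations: the paper extends $(Y,\xi)$ to a projective structure on the quotient and then pulls back along the $G$-covering, whereas you pull back first and apply Lemma \ref{lem:exte} upstairs; since the poles upstairs lie over the poles downstairs, the two constructions agree.

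The one place where you go beyond the paper --- which simply asserts $G=\aut(X,\pi^*\sigma)$ ``by construction'' --- is your argument for the inclusion $\aut(X,\sigma)\le\aut(X,\omega)$, and that argument has a soft spot. You claim a projective automorphism must preserve ``the set of poles (charts at $\infty$)''. But the pole set is not intrinsic to $\sigma$: under the coordinate change $\zeta=1/z$ of Lemma \ref{lem:exte}, a pole of $\omega$ of order $h+2$ becomes a branch point of $\sigma$ of order $h$, so a double pole ($h=0$) becomes an unbranched point of $\sigma$, a priori indistinguishable from a regular point of $\omega$; moreover ``taking the value $\infty$'' is not a $\pslc$-invariant notion. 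Even granting that the relevant charts fix $\infty$, the M\"obius maps doing so are the affine maps $z\mapsto az+b$, and excluding $a\neq1$ requires a further argument. Fortunately none of this is needed: in the construction underlying Corollary \ref{prop:thmaone} (Proposition \ref{prop:everyisautx} followed by Proposition \ref{prop:thmatwo}) one already has $\aut(X,\omega)=\aut(X)=G$, and since translations are M\"obius maps while projective automorphisms are in particular conformal, the sandwich $G=\aut(X,\omega)\le\aut(X,\sigma)\le\aut(X)=G$ forces equality. I would replace your chart-by-chart analysis with that one line.
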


\begin{proof}This is a direct consequence of Corollary \ref{prop:thmaone} and hence we adopt the same notation. For a finite group $G$, there exists a finite degree $G-$covering $\pi:S_h\longrightarrow S_g$. Let $(Y,\xi)$ be a translation surface with poles where $\xi$ is an abelian differential of the second kind, see Definition \ref{def:kindofdiff}. Extend $(Y,\xi)$ to a branched projective structure $(Y,\sigma)$ by "filling" the poles with $\cp-$charts. Then use $\pi$ to pull-back the branched projective structure on $S_h$. Let $(X,\,\pi^*\sigma)$ be the resulting structure, then $G=\aut(X,\,\pi^*\sigma)$  holds by construction.\end{proof}

\noindent We finally prove our Corollary \ref{cor:bps} which we restate here for the reader convenience.

\begin{cor}[\,\ref{cor:bps}\,]
Let $X$ be a Hurwitz surface. Then there exists a branched projective structure $\sigma$ on $X$ such that $|\aut(X,\sigma)\,|=84(g-1)$.
\end{cor}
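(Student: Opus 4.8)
The plan is to assemble the final Corollary from pieces already established in the excerpt, since the work has essentially been done. First I would recall the characterisation stated just after Definition \ref{def:hursfs}: a compact Riemann surface $X$ is Hurwitz if and only if $X\cong\mathbb H/K$ with $K\lhd\Delta(2,3,7)$ of finite index, and in that case $|\aut(X)\,|=84(g-1)$ with $X/\aut(X)\cong\cp$. So from the hypothesis that $X$ is a Hurwitz surface I immediately extract two facts: its automorphism group $G=\aut(X)$ has order $84(g-1)$, and the quotient $Y=X/G$ is the Riemann sphere.

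Next I would invoke Proposition \ref{prop:thmatwo} applied to this $X$ and $G=\aut(X)$. That proposition produces a meromorphic differential $\omega\in\Omega(X)$ with poles of finite order such that $G=\aut(X,\omega)$; concretely $\omega=\pi^*\xi$ where $\pi:X\to Y\cong\cp$ is the quotient projection and $\xi$ is a meromorphic differential of the second kind on the sphere (such $\xi$ exists on any Riemann surface). Because $\xi$ is of the second kind, Lemma \ref{lem:exte} then applies to the pair $(X,\omega)$: the translation surface with poles extends to a genuine branched projective structure $\sigma$ on the closed surface $S_g$, obtained by filling in the poles with $\cp$-valued charts as described in the discussion preceding that lemma.

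The last step is to verify the automorphism count for $\sigma$. Every translation $f\in\aut(X,\omega)$ is in particular a conformal automorphism of $X$ whose local expression is a translation $z\mapsto z+c$, which is a M\"obius transformation; since the extended charts at the poles are built canonically from the affine charts via $z\mapsto 1/z$ and $\infty\in\cp$, such an $f$ preserves the enlarged $\cp$-atlas and is therefore projective for $\sigma$. Hence $\aut(X,\omega)\le\aut(X,\sigma)$. On the other hand $\aut(X,\sigma)\le\aut(X)=G=\aut(X,\omega)$, the first inclusion being noted right after the definition of projective automorphisms. Combining the two inclusions gives $\aut(X,\sigma)=G$, so $|\aut(X,\sigma)\,|=|G\,|=84(g-1)$, as required.

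I do not expect a genuine obstacle here, since every ingredient is already in place; the only point requiring slight care is the middle inclusion $\aut(X,\omega)\le\aut(X,\sigma)$, namely checking that a translation of $(X,\omega)$ really does act projectively on the filled-in structure and not merely on the punctured surface. The cleanest way to see this is that the extension in Lemma \ref{lem:exte} is functorial: a translation $f$ fixes $\omega$, hence commutes with the canonical procedure that produces the $\cp$-charts at the poles, so it automatically extends to a projective automorphism of $\sigma$.
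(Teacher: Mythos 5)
Your argument is correct and follows essentially the same route as the paper's own proof (specifically its ``alternative'' version): pull back a second-kind differential $\xi$ from $\cp$ via $\pi:X\to X/\aut(X)\cong\cp$ to get $(X,\omega)$ with $\aut(X,\omega)=\aut(X)$, then extend to a branched projective structure via Lemma \ref{lem:exte}. Your explicit check of the sandwich $\aut(X,\omega)\le\aut(X,\sigma)\le\aut(X)=\aut(X,\omega)$ is a welcome addition of detail that the paper compresses into ``by construction''; the paper also notes an even shorter first route, namely pulling back the standard projective structure of $\cp$ directly along $\pi$.
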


\begin{proof}[Proof of Corollary \ref{cor:bps}]
Let $X$ be a Hurwitz surface of genus $g\ge2$, see Definition \ref{def:hursfs}, and let $\aut(X)$ its group of conformal automorphisms. Let $\pi:X\longrightarrow X/\aut(X)\cong\cp$ be the covering projection. The natural projective structure on $\cp$ pulls back to a branched projective structure $\sigma$ on $X$. By construction we have $|\aut(X)\,|=|\aut(X,\,\sigma)\,|=84(g-1)$ as desired. Alternatively, let $\xi$ be an abelian differential of the second kind on $\cp$. By Corollary \ref{cor:main}, $(X,\omega)$ is a translation surface with $|\aut(X,\omega)\,|=84(g-1)$. Notice that $\omega\in\Omega(X)$ is a differential of the second kind. Then use Lemma \ref{lem:exte} for extending $(X,\omega)$ to a branched projective structure $(X,\sigma)$. By construction $|\aut(X,\,\sigma)\,|=84(g-1)$.
\end{proof}

\smallskip

\noindent Every Hurwitz projective structure obtained in this way enjoys particular geometric features. In the first place, by construction, they all carry a singular Euclidean metric away from a finite set of isolated points corresponding to those which are modelled at $\infty\in\cp$. Secondly, they all have trivial holonomy. This is in fact a mere consequence of our construction. It would be interesting to determine whether there are branched projective structures with \textit{non-trivial} holonomy. For unbranched structures, we already know from \cite{FR} that the Fuchsian uniformization of a Hurwitz surface is always a Hurwitz projective structure and they are known to have non-trivial discrete holonomy. In addition, we may also wonder whether there are Hurwitz projective structures that carry some global singular Riemannian metric such as hyperbolic or spherical metrics. From \cite{SPWS}, we can deduce no Hurwitz projective structure can arise from a translation surface determined by a holomorphic differential as its group of translation has never more than $4g-4$ translations (in this case there are no points modelled at $\infty\in\cp$).

\medskip

%\begin{rmk}\label{rmk:filling}A (un)branched projective structure $\sigma$ on $S_{g,n}$ may naturally extends to a branched projective structure $\sigma'$ on $S_{g,k}$, for $0\le k<n$, by filling the punctures with trivial holonomy with a possibly branched chart. in turns, the underlying complex structure extends to a complex structure on $S_{g,k}$. As we shall see this simple remark will play an important role in the sequel.\end{rmk}

\bibliographystyle{amsalpha}
\bibliography{atswp}

\end{document}